\newcommand{\PP}{\mathbb{P}}
\newcommand{\pmn}{\PP^m\times\PP^n}
\newcommand{\OO}{\mathcal{O}}
\newcommand{\II}{\mathcal{I}}
\newcommand{\FF}{\mathbb{F}}
\newcommand{\DD}{\mathcal{D}}
\newcommand{\GG}{\mathbb{G}}
\newcommand{\ZZ}{\mathbb{Z}}
\newcommand{\eE}{\mathcal{E}}
\newcommand{\ev}{\operatorname{ev}}
\newcommand{\codim}{\operatorname{codim}}
\newcommand{\xX}{\mathcal{X}}
\newcommand{\yY}{\mathcal{Y}}
\newcommand{\Pic}{\operatorname{Pic}}
\theoremstyle{plain}
\newtheorem{lemma}{Lemma}[section]
\newtheorem*{theorem*}{Theorem}
\newtheorem*{lemma*}{Lemma}
\newtheorem*{proposition*}{Proposition}
\newtheorem*{conjecture*}{Conjecture}
\newtheorem*{corollary*}{Corollary}
\newtheorem*{problem*}{Problem}
\newtheorem{theorem}[lemma]{Theorem}
\newtheorem{proposition}[lemma]{Proposition}
\theoremstyle{definition}
\newtheorem{definition}[lemma]{Definition}
\newtheorem{remark}[lemma]{Remark}
\begin{document}

\title{Algebraic hyperbolicity of very general hypersurfaces in products of projective spaces}

\author[Wern Yeong]{Wern Yeong}
\email{wyeong@nd.edu}
\address{Department of Mathematics \\ University of Notre Dame, Notre Dame, IN 46556}

\begin{abstract} 
We study the algebraic hyperbolicity of very general hypersurfaces in $\pmn$ by using three techniques that build on past work by Ein, Voisin, Pacienza, Coskun and Riedl, and others. As a result, we completely answer the question of whether or not a very general hypersurface of bidegree $(a,b)$ in $\pmn$ is algebraically hyperbolic, except in $\PP^3\times \PP^1$ for the bidegrees $(a,b)= (7,3), (6,3)$ and $(5,b)$ with $b\geq 3.$ As another application of these techniques, we improve the known result that very general hypersurfaces in $\PP^n$ of degree at least $2n-2$ are algebraically hyperbolic when $n\geq 6$ to $n \geq 5$, leaving $n=4$ as the only open case.
\end{abstract}

\maketitle

\section{Introduction}\label{sec-introduction}

A complex projective variety is \emph{algebraically hyperbolic} if for some $\varepsilon>0$ and some ample divisor $H$, every integral curve $C$ that lies in the variety satisfies
\begin{equation}
\label{eqn-definition}
    2g(C)-2 \geq \varepsilon\cdot \deg_H C,
\end{equation}
where $g(C)$ is the geometric genus of the curve. It follows that varieties that contain any rational or elliptic curves are not algebraically hyperbolic. 

Algebraic hyperbolicity was introduced by Demailly in \cite{Demailly} as an algebraic analogue to Kobayashi hyperbolicity for complex manifolds, which was shown to be equivalent to Brody hyperbolicity when the manifold is compact \cite{Brody}. A compact complex manifold $X$ is \emph{Brody hyperbolic} if all holomorphic maps $\mathbb{C}\rightarrow X$ are constant, i.e. $X$ contains no entire curves. Demailly \cite{Demailly} proved that for smooth projective varieties Kobayashi hyperbolicity implies algebraic hyperbolicity, and conjectured that the converse holds.

The algebraic and Brody hyperbolicity of very general degree $d$ hypersurfaces $X_d\subseteq\PP^n$ is well-studied. (See for instance the surveys \cite{CoskunSurvey, DemaillySurvey, VoisinSurvey}.) For $n=3$, Xu \cite{Xu} proved that very general surfaces $X_d\subseteq \PP^3$ of degree $d\geq 6$ are algebraically hyperbolic by using a degeneration argument. Then Coskun and Riedl \cite{CoskunRiedl} improved Xu's bound to $d\geq 5$ by considering the degrees of surface scrolls that contain any particular curve in $X_d$. Their result completes the classification in $\PP^3$ since surfaces of degree at most $4$ contain rational curves. For larger $n$, Clemens \cite{Clemens86} and Ein \cite{Ein} proved that $X_d$ is algebraically hyperbolic when $d\geq 2n$ for $n\geq 4$. The bound was subsequently improved by Voisin \cite{Voisin, Voisincorrection} to $d\geq 2n-1$ for $n\geq4$, and then by Pacienza \cite{Pacienza2} and Clemens and Ran \cite{ClemensRan} to $d\geq 2n-2$ for $n\geq 6$. We confirm in this article that very general degree 8 hypersurfaces in $\PP^5$ are algebraically hyperbolic, hence improving the previously-known result to $d\geq 2n-2$ for $n\geq 5$ (see Theorem \ref{thm-pn}). When $d\leq 2n-3$, $X_d$ contains lines and hence, is not algebraically hyperbolic. Thus, the only remaining open question in $\PP^n$ is the case of sextic threefolds. 

The hyperbolicity of very general hypersurfaces has also been studied in other ambient varieties besides $\PP^n$. Haase and Ilten \cite{HaaseIlten} studied very general surfaces in Gorenstein toric threefolds by building on the focal loci techniques of Chiantini and Lopez \cite{CL}. As an application, they almost completely classified very general surfaces in the threefolds $\PP^2\times \PP^1$, $\PP^1 \times \PP^1 \times \PP^1$, $\FF_e\times \PP^1$ and the blowup of $\PP^3$ at a point. Then, Coskun and Riedl completed this classification in \cite{CoskunRiedlsurfaces} as a result of their techniques for three-dimensional complex projective varieties that admit a group action with dense orbit. More recently, Robins \cite{Robins} almost completely classified very general surfaces in smooth projective toric threefolds with Picard rank 2 or 3 by combining Haase and Ilten's focal loci techniques with a study of the combinatorics of such threefolds.

In this paper, we study very general hypersurfaces in $\pmn$ and prove the following main theorem. We assume throughout that $m\geq n.$

\begin{theorem}\label{thm-pmxpn}
	A very general hypersurface in $\pmn$ of bidegree $(a,b)$ is 
	\begin{enumerate}
	    \item algebraically hyperbolic if
	    \begin{enumerate}
	        \item $m+n\geq5$: $a\geq 2m+n-2$ and $b\geq m+2n-2$; or
	        \item $(m,n)=(2,2)$: $a\geq 5$ and $b\geq 5$; or
	        \item $(m,n)=(3,1)$: $a\geq 6$ and $b\geq 4$, or $a\geq 8$ and $b= 3$,
	    \end{enumerate}
	    \item not algebraically hyperbolic if
	    \begin{enumerate}
	        \item $m+n\geq5$: either $a< 2m+n-2$ or $b< m+2n-2$; or
	        \item $(m,n)=(2,2)$: either $a< 5$ or $b< 5$; or
	        \item $(m,n)=(3,1)$: either $a< 5$ or $b< 3$.
	    \end{enumerate}
	\end{enumerate}
	This completely resolves algebraic hyperbolicity of very general hypersurfaces in $\pmn$, except in $\PP^3\times \PP^1$ for bidegrees $(a,b)= (7,3), (6,3)$ and $(5,b)$ with $b\geq 3.$
\end{theorem}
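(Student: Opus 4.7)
The theorem splits into showing non-algebraic-hyperbolicity (part 2) via explicit low-genus curves, and establishing algebraic hyperbolicity (part 1) via deformation theory on the universal family. For part (2) with $m+n\geq 5$, the plan is a parameter count: a curve of class $(1,0)$ in $\pmn$ is a line in some $\mathbb{P}^m$-fiber above a point of $\mathbb{P}^n$, so the space of such lines has dimension $2(m-1)+n = 2m+n-2$, while containment in a bidegree $(a,b)$ hypersurface imposes $a+1$ conditions. When $a<2m+n-2$ the incidence variety dominates the parameter space $B$, so a very general $X$ contains such a rational curve and is not algebraically hyperbolic. The case $b<m+2n-2$ is symmetric. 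For the low-dimensional exceptions $(m,n)=(2,2)$ and $(3,1)$, one refines the count with curves of slightly higher bidegree (conics, or rational curves sweeping a scroll) to match the sharper thresholds stated in (2)(b) and (2)(c).

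For part (1), I would set up the Clemens--Ein--Voisin deformation framework: let $f: \xX \to B$ be the universal family of bidegree-$(a,b)$ hypersurfaces and fix an integral curve $C$ on a very general fiber $X$, with normalization $\nu:\tilde C\to C$. The key input is the sheaf $\nu^*T_{\pmn}(-X)$; global sections of appropriate twists of this sheaf induce a subsheaf of $\nu^*N_{C/\pmn}$ whose degree can be estimated using the Euler sequences on each projective factor. This yields an inequality of the shape
\begin{equation*}
2g(C)-2 \geq (a-2m-n+2)\,\deg_1 C + (b-m-2n+2)\,\deg_2 C - \text{(lower order)},
\end{equation*}
which has the required sign in the ample direction exactly when $a\geq 2m+n-2$ and $b\geq m+2n-2$, establishing (1)(a).

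The small cases $(m,n)=(2,2)$ and $(3,1)$ require the two enhancements mentioned in the abstract. Pacienza's symmetric-power method adds sections coming from $S^k T_{\pmn}(-kX)$, and Coskun--Riedl's scroll argument bounds $g(C)$ by analyzing the minimal-degree surface scroll in $\pmn$ that sweeps $C$. Combining these with the basic Ein--Voisin inequality should push the threshold from $a\geq 4$ to $a\geq 5$ in $\mathbb{P}^2\times\mathbb{P}^2$, and produce the mixed bounds $(a,b)\geq (6,4)$ or $(a,b)\geq (8,3)$ in $\mathbb{P}^3\times\mathbb{P}^1$.

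The main obstacle will be assembling these three techniques in the low-dimensional cases so as to yield the stated sharp numerical bounds. The remaining open bidegrees in $\PP^3\times\PP^1$---namely $(7,3)$, $(6,3)$, and $(5,b)$ with $b\geq 3$---flag precisely the regime where the available methods fall short: the Ein--Voisin estimate is off by a small constant, Pacienza's symmetric powers contribute only finitely many useful sections, and the scrolls through a generic curve are too large to control. A final step is to verify that one can extract a uniform $\varepsilon>0$, independent of $C$, from the resulting inequality together with a suitable ample divisor $H$.
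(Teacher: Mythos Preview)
Your outline for part (2) is essentially right in the large-dimension case, but the details for the exceptional cases are off. For $(m,n)=(3,1)$ no refinement is needed: $2m+n-3=4$ and $m+2n-3=2$, so the basic line count already yields exactly $a<5$ or $b<3$. For $(m,n)=(2,2)$ the line count only reaches $a\leq 3$, and the paper closes the gap at $a=4$ not with rational curves or conics but with \emph{elliptic} curves: a very general $(4,b)$ hypersurface, viewed as a net of plane quartics, contains a binodal member of geometric genus $1$.

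The real gap is in part (1). The inequality you write down,
\[
2g(C)-2 \geq (a-2m-n+2)\deg_1 C + (b-m-2n+2)\deg_2 C - (\text{lower order}),
\]
is two units stronger than what the basic Ein--Voisin argument actually produces. The straightforward estimate (Proposition~\ref{prop-0} in the paper) gives coefficients $a-(2m+n-1)$ and $b-(m+2n-1)$, hence only $a\geq 2m+n$, $b\geq m+2n$. Getting down to $a\geq 2m+n-2$ and $b\geq m+2n-2$ is the entire content of the paper, and it requires the scroll and osculating-line techniques in \emph{every} dimension, not just the small ones.

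The organizing principle you are missing is the paper's notion of the \emph{type} $(s_1,s_2)$ of a curve: the minimal number of summands $M_{H_1}^{\oplus s_1}\oplus M_{H_2}^{\oplus s_2}$ surjecting onto $N_{h_t/X_t}$. The proof of (1)(a) is a case split on $(s_1,s_2)$. When $s_1,s_2\leq m+n-4$ the basic bound suffices. When $s_1=m+n-3$ one extracts a rank-one quotient of $M_{H_1}$ and runs the Coskun--Riedl scroll argument to gain one unit. When $s_1=m+n-2$ (so $s_2=0$) the scroll argument gains one unit but not two; here the paper shows the curve must lie in the locus $\Lambda^{\PP^m}_a$ swept by osculating $\PP^m$-lines, which for $a=2m+n-2$ is itself a smooth curve of general type. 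This last step is not Pacienza's symmetric-power method $S^kT(-kX)$; it is a separate argument (Lemmas~\ref{lem-2.10}--\ref{lem-curve}) identifying an explicit curve of general type that contains $Y_t$. The small cases $(2,2)$ and $(3,1)$ then run the same case analysis with fewer types to check.
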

 
Note that the classification of very general surfaces in $\PP^2\times \PP^1$ was already done by Haase and Ilten \cite{HaaseIlten} and Coskun and Riedl \cite{CoskunRiedlsurfaces}, while hypersurfaces in $\PP^1\times \PP^1$ are curves, which are easy to classify. We give a description of the remaining open cases in $\PP^3\times \PP^1$ in Remark \ref{rmk-open}.

\subsection{Organization} In \S\ref{sec-setup}, we introduce the setup developed by Ein, Voisin, Pacienza, and others. In \S\ref{sec-tech}, we present the three techniques that are used in \S\ref{sec-main} to prove the main theorem. As another application of these techniques, we give a streamlined proof of the algebraic hyperbolicity of very general hypersurfaces in $\PP^n$ of degree at least $2n-1$ when $n\geq4$ and degree at least $2n-2$ when $n\geq 5$.

\subsection{Acknowledgements} I would like to thank my adviser Eric Riedl for suggesting this problem to me, and for many valuable discussions and revisions of my drafts.

\section{Setup}\label{sec-setup}

We use a similar setup as \cite{ClemensRan, CoskunRiedl, CoskunRiedlsurfaces, Pacienza, Pacienza2, Voisin}. Denote by $\pi_1$ and $\pi_2$ the projection maps from $\pmn$. Let $H_1=\pi_1^*\OO_{\PP^m}(1)$ and $H_2=\pi_2^*\OO_{\PP^n}(1)$. Then $\Pic(\pmn)=\ZZ\cdot H_1\oplus \ZZ\cdot H_2$. For $a,b>0$, denote the line bundle $aH_1+bH_2$ by $\eE$. Suppose that a very general section $X$ of $\eE$ contains a curve $Y$ of geometric genus $g$ and degree $e$ with respect to the ample class $H_1+H_2$. Denote $V=H^0(\pmn, \eE)$, and let $\xX_V\subseteq (\pmn)\times V$ be the universal hypersurface of bidegree $(a,b)$. Let $T\subseteq \text{Hilb}(\pmn)\times V$ be an irreducible variety such that 
\begin{enumerate}
	\item a general element $(Y, F)$ of $T$ projects to a point in $\text{Hilb}(\pmn)$ representing a curve $Y$ of geometric genus $g$ and $(H_1+H_2)$-degree $e$ that is contained in the hypersurface defined by $F$, and 
	\item the projection map $T\rightarrow V$ is dominant, and
	\item $T$ is stable under the $G:=GL_{m+1}\times GL_{n+1}$-action on $\text{Hilb}(\pmn)\times V.$ 
\end{enumerate}

Let $\yY_T\subseteq \xX_T := \xX_V \times_V T$ denote the universal curve over $T$. By (3), there is a $G$-invariant subvariety $U\subseteq T$ such that $U\rightarrow V$ is \'etale, and we restrict the universal curve to $\yY_U$. Finally after taking a $G$-equivariant resolution of $\yY_U$ and restricting $U$ to a $G$-invariant open set, we obtain a smooth family $\yY\rightarrow U$ whose fibers are smooth curves mapping to curves of geometric genus $g$ and $(H_1+H_2)$-degree $e$. Similarly, pull back $\xX_T$ to $U$ to get a smooth family $\xX\rightarrow U$ of hypersurfaces. Denote the projection maps by $p_1:\xX\rightarrow U$ and $p_2:\xX\rightarrow \pmn$. Denote by $h:\yY\rightarrow \xX$ the natural map, which is generically injective. Note that 
\begin{equation}\label{eqn-codimyx}
    \codim (\yY\subseteq \xX) = m+n-2.
\end{equation}

By $G$-invariance, we can define the \emph{vertical tangent sheaves} $T_{\xX/\pmn}$ and $T_{\yY/\pmn}$ by the following short exact sequences:
$$0\rightarrow T_{\xX/\pmn}\rightarrow T_\xX \rightarrow p_2^*T_{\pmn}\rightarrow 0$$
$$0\rightarrow T_{\yY/\pmn}\rightarrow T_\yY \rightarrow h^*p_2^*T_{\pmn}\rightarrow 0$$

Let $t\in U$ be a general element. Denote the fibers of $h^*\yY$ and $\xX$ over $t$ by $Y_t$ and $X_t$ respectively, and the restriction of $h$ by $h_t:Y_t\rightarrow X_t.$ Let us also define the normal bundles $N_{h/\xX}$ and $N_{h_t/X_t}$ by the following short exact sequences:
$$0\rightarrow T_\yY \rightarrow h^*T_\xX \rightarrow N_{h/\xX} \rightarrow 0$$
$$0\rightarrow T_{Y_t}\rightarrow h_t^*T_{X_t} \rightarrow N_{h_t/X_t}\rightarrow 0$$

Taking degrees in the second short exact sequence above, we have
\begin{equation}\label{eqn-degree}
    2g(Y_t)-2-K_{X_t}\cdot h_t(Y_t)=\deg N_{h_t/X_t}.  
\end{equation}
Hence, one way to confirm algebraic hyperbolicity is to find a suitable lower bound for $\deg N_{h_t/X_t}$.

We denote by $M_\eE$ the \emph{Lazarsfeld-Mukai bundle associated to $\eE$}, which is defined by the short exact sequence
$$0 \rightarrow M_\eE \rightarrow H^0(\pmn,\eE) \otimes \OO_{\pmn} \xrightarrow{\ev} \eE \rightarrow 0.$$
We can define similarly the Lazarsfeld-Mukai bundles $M_{H_1}$ and $M_{H_2}$ associated to $H_1$ and $H_2$ respectively. These bundles play a central role in the proof techniques in \S\ref{sec-tech}.

The following result follows immediately from Proposition 2.1 in \cite{CoskunRiedlsurfaces} and summarizes the relationship between the vertical tangent sheaves, the normal bundles, and the Lazarsfeld-Mukai bundle $M_\eE$.

\begin{proposition}[cf Proposition 2.1 in \cite{CoskunRiedlsurfaces}]\label{prop-main}\;
    \begin{enumerate}
        \item $N_{h/\xX}\vert_{Y_t}\simeq N_{h_t/X_t}$.
        \item The quotient $\left(h^*T_{\xX/\pmn}\right)\big/\left(T_{\yY/\pmn}\right)$ of vertical tangent sheaves is isomorphic to $N_{h/\xX}$.
        \item The vertical tangent sheaf $T_{\xX/\pmn}$ on $\xX$ is isomorphic to the pullback $p_2^*M_\eE$ of the Lazarsfeld-Mukai bundle.
    \end{enumerate}
\end{proposition}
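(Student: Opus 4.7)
The plan is to deduce parts (1) and (2) from the snake lemma applied to natural commutative diagrams of short exact sequences, while part (3) is a direct infinitesimal computation on the universal hypersurface $\xX_V \subseteq \pmn \times V$.

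For part (1), the first step is to write the two ``fiber-into-family'' smoothness sequences
$0 \to T_{Y_t} \to T_\yY|_{Y_t} \to T_U|_t \otimes \OO_{Y_t} \to 0$,
coming from the smoothness of $\yY \to U$, and
$0 \to h_t^*T_{X_t} \to h^*T_\xX|_{Y_t} \to T_U|_t \otimes \OO_{Y_t} \to 0$,
obtained by pulling back to $Y_t$ via $h_t$ the corresponding smoothness sequence for $\xX \to U$ restricted to $X_t$. These fit into a commutative diagram whose rightmost vertical arrow is the identity and whose other vertical arrows are induced by $T_\yY \to h^*T_\xX$. The snake lemma then identifies the two cokernels, yielding $N_{h/\xX}|_{Y_t} \simeq N_{h_t/X_t}$.

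Part (2) is proved by the same snake lemma principle applied to the vertical tangent sheaves: pull back the defining sequence of $T_{\xX/\pmn}$ along $h$ and stack it beneath the defining sequence of $T_{\yY/\pmn}$, again with the identity on the right-hand column $h^*p_2^*T_{\pmn}$. The induced map of kernels produces the desired isomorphism of cokernels $(h^*T_{\xX/\pmn})/T_{\yY/\pmn} \simeq h^*T_\xX / T_\yY = N_{h/\xX}$.

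For part (3), the plan is to work directly on $\xX_V$. Since $V$ is a vector space, $T_V$ is canonically trivialized by $V$, so $T_{\pmn \times V}$ splits as $\pi_1^*T_{\pmn} \oplus V \otimes \OO_{\pmn \times V}$, where here $\pi_1$ denotes the projection to $\pmn$. The universal hypersurface $\xX_V$ is cut out by the universal section of $\pi_1^*\eE$, whose differential at a point $(x,F)$ sends a tangent vector $(v,G)$ to $dF_x(v) + G(x) \in \eE_x$. The vertical tangent sheaf $T_{\xX_V/\pmn}$ is by definition the intersection of $T_{\xX_V}$ with the $V$-summand, i.e.\ tangent vectors with $v = 0$; but setting $v = 0$ in the tangency condition forces $G(x) = 0$, which identifies $T_{\xX_V/\pmn}$ with the kernel of the evaluation map $V \otimes \OO_{\xX_V} \to \pi_1^*\eE|_{\xX_V}$, namely $\pi_1^* M_\eE|_{\xX_V}$. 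Pulling back along the étale map $\xX \to \xX_V$ (which is the base change of $U \to V$) gives $T_{\xX/\pmn} \simeq p_2^* M_\eE$. The main obstacle I anticipate is in this last part, specifically matching the abstract definition of $T_{\xX_V/\pmn}$ with the Lazarsfeld--Mukai kernel via the evaluation map and ensuring the identification survives the étale base change; parts (1) and (2) are essentially formal once one knows that $T_\yY \to h^*T_\xX$ is injective at a general point, which holds because $h$ is generically an immersion on general fibers.
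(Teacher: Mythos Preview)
Your proposal is correct. The paper itself does not give a proof of this proposition; it simply records that the statement ``follows immediately from Proposition 2.1 in \cite{CoskunRiedlsurfaces}'' and states the three parts without further argument. Your snake-lemma arguments for (1) and (2) and the infinitesimal computation of $T_{\xX_V/\pmn}$ for (3) are exactly the standard route (and are what one finds in \cite{CoskunRiedlsurfaces}), so in effect you have supplied the proof that the paper chose to outsource to the citation. The only minor point to be careful with is the one you flag yourself: the injectivity of $T_\yY \to h^*T_\xX$ is being assumed in the paper's definition of $N_{h/\xX}$ as a short exact sequence, and this is justified (after the shrinking of $U$ already performed) by the generic injectivity of $h$ on fibers.
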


\section{Proof techniques}\label{sec-tech}

In order to estimate the positivity of $N_{h_t/X_t},$ we start with the notion of a section-dominating collection of line bundles introduced in \cite{CoskunRiedlsurfaces}.

\begin{definition}[cf Definition 2.3 in \cite{CoskunRiedlsurfaces}]\label{def-secdom}
    Let $\eE$ be a vector bundle on a smooth, complex projective variety $A$. A collection of non-trivial, globally generated line bundles $L_1,\ldots,L_u$ is called a \emph{section-dominating collection} of line bundles for $\eE$ if 
    \begin{enumerate}
        \item $\eE\otimes L_i^\vee$ is globally generated for every $1\leq i \leq u$, and
        \item the map 
        $$\bigoplus^u_{i=1} \left( H^0(L_i\otimes \II_p) \otimes H^0(\eE\otimes L_i^\vee) \right)\rightarrow H^0(\eE\otimes \II_p) $$ 
        is surjective at every point $p\in A$.
    \end{enumerate}
\end{definition}

We can view a section-dominating collection of line bundles for $\eE$ as a collection of simpler building blocks for $\eE$, in the sense of the following proposition.

\begin{proposition}[cf Proposition 2.6 in \cite{CoskunRiedlsurfaces}]\label{prop-secdom}
    Let $\eE$ be a globally generated vector bundle and $M_\eE$ the Lazarsfeld-Mukai bundle associated to $\eE.$ Let $L_1,\ldots,L_u$ be a section-dominating collection of line bundles for $\eE.$ Then for some integers $s_i$, there is a surjection 
    \begin{equation}\label{eqn-3.2}
        \oplus_{i=1}^u M_{L_i}^{\oplus s_i} \longrightarrow M_\eE
    \end{equation}
    induced by multiplication by some choice of basis elements of $H^0(\eE\otimes L_i^\vee)$ for $0\leq i \leq u.$
\end{proposition}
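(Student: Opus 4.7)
The plan is to construct the desired surjection explicitly from bases of the spaces $H^0(\eE\otimes L_i^\vee)$, and then verify surjectivity fiberwise by combining condition (2) of Definition \ref{def-secdom} with Nakayama's lemma on the cokernel.

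First, I would set $s_i := h^0(\eE\otimes L_i^\vee)$ and fix a basis $\phi_{i,1},\ldots,\phi_{i,s_i}$ of $H^0(\eE\otimes L_i^\vee)$. Each $\phi_{i,j}$ corresponds to a morphism $L_i\to\eE$, which fits into a commuting square of evaluation maps
\[
\begin{CD}
H^0(L_i)\otimes\OO @>\ev>> L_i \\
@V{m_{\phi_{i,j}}\otimes \mathrm{id}}VV @VV\phi_{i,j}V \\
H^0(\eE)\otimes\OO @>>\ev> \eE
\end{CD}
\]
where $m_{\phi_{i,j}}: H^0(L_i)\to H^0(\eE)$ is multiplication by $\phi_{i,j}$. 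Passing to kernels of the horizontal rows produces a morphism $M_{L_i}\to M_\eE$, and assembling these contributions over all $i$ and $j$ yields
\[
\Phi:\bigoplus_{i=1}^u M_{L_i}^{\oplus s_i}\longrightarrow M_\eE,
\]
the candidate for the required surjection.

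Next, I would check surjectivity on every fiber. Since $\eE$ and each $L_i$ are locally free, restricting the defining sequence of each Lazarsfeld--Mukai bundle to the residue field at a point $p\in A$ remains exact, yielding canonical identifications $M_\eE|_p\cong H^0(\eE\otimes\II_p)$ and $M_{L_i}|_p\cong H^0(L_i\otimes\II_p)$ as the kernels of evaluation at $p$. Chasing the diagram then shows that $\Phi|_p$ is the natural multiplication map
\[
\bigoplus_{i=1}^u H^0(L_i\otimes\II_p)\otimes H^0(\eE\otimes L_i^\vee)\longrightarrow H^0(\eE\otimes\II_p),\qquad s\otimes\phi\mapsto \phi\cdot s,
\]
where the basis $\{\phi_{i,j}\}_j$ identifies $\bigoplus_j H^0(L_i\otimes\II_p)$ with $H^0(L_i\otimes\II_p)\otimes H^0(\eE\otimes L_i^\vee)$. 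By condition (2) of Definition \ref{def-secdom}, this map is surjective at every $p$, so the coherent sheaf $\mathrm{coker}(\Phi)$ has vanishing fiber at every point, and Nakayama's lemma forces $\mathrm{coker}(\Phi)=0$.

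The substance of the argument lies not in any hard estimate but in this fiber computation: one must verify carefully that, under the natural identifications of the Lazarsfeld--Mukai fibers with spaces of sections vanishing at $p$, the induced map $\Phi|_p$ really is the multiplication map appearing in condition (2) of Definition \ref{def-secdom}. Once that identification is in place, surjectivity of $\Phi$ follows immediately from the section-dominating hypothesis applied pointwise.
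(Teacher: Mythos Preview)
The paper does not supply its own proof of this proposition; it is stated with a citation to Proposition~2.6 of \cite{CoskunRiedlsurfaces}. Your argument is correct and is the standard one: build $\Phi$ from multiplication by a basis of $H^0(\eE\otimes L_i^\vee)$, identify the fiber of each Lazarsfeld--Mukai bundle at $p$ with the space of sections vanishing at $p$ (using that $\eE$ and the $L_i$ are globally generated, so the defining sequences are short exact sequences of locally free sheaves and remain exact on fibers), recognize $\Phi|_p$ as the multiplication map in condition~(2) of Definition~\ref{def-secdom}, and conclude by Nakayama. This is exactly the argument given in the cited reference, so there is nothing to contrast.
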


Example 2.4 in \cite{CoskunRiedlsurfaces} shows that on $A=\pmn$, the line bundles $H_1,H_2$ form a section-dominating collection for $\eE=aH_1+bH_2.$ By Proposition \ref{prop-secdom}, this gives a surjection (\ref{eqn-3.2}) induced by multiplication by generic polynomials in $H^0(\eE\otimes H_i^\vee)$ for $i=1,2$. We can use this surjection to obtain a first pair of lower bounds $a\geq a_0$ and $b\geq b_0$ such that a very general hypersurface of bidegree $(a,b)$ is algebraically hyperbolic. 

\begin{lemma}\label{lem-bounds}
    Let $h_t:Y_t\rightarrow X_t$ be a curve in a very general hypersurface $X_t\subseteq \pmn$ of bidegree $(a,b)$. Let $N$ be a vector bundle on the curve. Suppose that there is a surjective map
	\begin{equation*}
		\beta:M_{H_1}\;^{\oplus t_1}\oplus M_{H_2}\;^{\oplus t_2}\longrightarrow N
	\end{equation*} for some $t_1,t_2$. Then, 
	\begin{equation}\label{eqn-bounds1}
        \deg N\geq -t_1\cdot ({Y_t}\cdot H_1)-t_2\cdot ({Y_t}\cdot H_2).
    \end{equation}
	In particular, setting $N=N_{h_t/X_t},$ we obtain
    \begin{equation}\label{eqn-bounds2}
        2g(Y_t)-2\geq (a-(m+1+t_1))\cdot ({Y_t}\cdot H_1)+(b-(n+1+t_2))\cdot ({Y_t}\cdot H_2).    
    \end{equation}
\end{lemma}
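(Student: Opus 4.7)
The plan is to convert the surjection $\beta$ into a lower bound on $\deg N$ by exploiting that $M_{H_1}$ and $M_{H_2}$ are subbundles of trivial bundles, and then to feed the result through the adjunction formula on $X_t$.

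First I would pull back the defining Lazarsfeld--Mukai sequences
$$0 \to M_{H_i} \to H^0(\pmn, H_i)\otimes \OO_{\pmn} \to H_i \to 0, \qquad i=1,2,$$
along the composition $Y_t \to X_t \hookrightarrow \pmn$. Since the quotient $H_i$ is a line bundle, each sequence is locally split and therefore pulls back to a short exact sequence on the smooth curve $Y_t$. Taking degrees gives $\deg(h_t^* M_{H_i}) = -(Y_t\cdot H_i)$, so the pulled-back middle term of $\beta$ has total degree $-t_1(Y_t\cdot H_1)-t_2(Y_t\cdot H_2)$.

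For \eqref{eqn-bounds1}, let $K = \ker\beta$. Since $N$ and the domain of $\beta$ are locally free on the smooth curve $Y_t$, so is $K$, and combining with the pulled-back Lazarsfeld--Mukai sequences realizes $K$ as a locally free subsheaf of the trivial bundle $\OO_{Y_t}^{\oplus (t_1(m+1)+t_2(n+1))}$. The main substantive step is then the standard fact that any locally free subsheaf of a trivial bundle on a smooth projective curve has non-positive degree: taking $\wedge^{\operatorname{rk} K}$ shows that $\det K$ injects into a trivial bundle, so some component $\det K \to \OO_{Y_t}$ is nonzero, exhibiting $(\det K)^\vee$ as having a nonzero global section and forcing $\deg K \leq 0$. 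Degree additivity in $0 \to K \to h_t^*(M_{H_1}^{\oplus t_1}\oplus M_{H_2}^{\oplus t_2}) \to N \to 0$ then yields \eqref{eqn-bounds1}.

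For \eqref{eqn-bounds2}, I would apply adjunction on $X_t \subseteq \pmn$: since $K_{\pmn} = -(m+1)H_1-(n+1)H_2$ and $\eE = aH_1+bH_2$, we get $K_{X_t}\cdot h_t(Y_t) = (a-(m+1))(Y_t\cdot H_1) + (b-(n+1))(Y_t\cdot H_2)$. Substituting into \eqref{eqn-degree} and using \eqref{eqn-bounds1} with $N = N_{h_t/X_t}$ gives \eqref{eqn-bounds2}. The only nontrivial piece is the subbundle-of-trivial-bundle degree bound; everything else is degree bookkeeping and adjunction, so I do not anticipate any real obstacle.
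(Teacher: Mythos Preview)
Your proof is correct and rests on the same underlying fact as the paper's: a locally free subsheaf of a trivial bundle on a smooth projective curve has non-positive degree. The packaging differs slightly. The paper bounds the degrees of the two \emph{images} $\beta(M_{H_i}^{\oplus t_i})$ separately and invokes Proposition~2.7 of \cite{CoskunRiedlsurfaces} for each, whereas you bound the degree of the \emph{kernel} $K$ directly and supply the subbundle-of-trivial argument yourself. Your route is a touch more streamlined and self-contained, since it avoids both the external citation and the intermediate inequality $\deg N \geq \deg \beta(M_{H_1}^{\oplus t_1}) + \deg \beta(M_{H_2}^{\oplus t_2})$; the paper's route has the mild advantage that tracking the images separately is what one actually needs later when refining the bound summand by summand (as in the scroll arguments of \S\ref{sec-scroll}). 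For the adjunction step and \eqref{eqn-bounds2}, the two arguments are identical.
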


\begin{proof}
    Since $\beta$ is surjective, we have
    \begin{equation*}
        \begin{split}
            \deg N&\geq \deg \beta\left(M_{H_1}\;^{\oplus t_1})+ \deg \beta(M_{H_2}\;^{\oplus t_2}\right)\\
            &\geq t_1\cdot (-\deg H_1\vert_{Y_t}) + t_2\cdot (-\deg H_2\vert_{Y_t})
        \end{split}
    \end{equation*}
	where the second inequality follows from Proposition 2.7 in \cite{CoskunRiedlsurfaces}. If $N=N_{h_t/X_t}$, then by (\ref{eqn-degree}), we obtain
	\begin{equation*}
	    \begin{split}
	        &2g(Y_t)-2\\ 
	        &= (K_{X_t}\cdot Y_t) + \deg N_{h_t/X_t}\\
	        &\geq (a-(m+1+t_1))\cdot (Y_t\cdot H_1) + (b-(n+1+t_2))\cdot (Y_t\cdot H_2)
        \end{split}
	\end{equation*}
\end{proof}

\begin{proposition}\label{prop-0}
    Suppose $a\geq 2m+n$ and $b\geq m+2n$, then a very general hypersurface $X\subseteq \PP^m\times \PP^n$ of bidegree $(a,b)$ is algebraically hyperbolic.
\end{proposition}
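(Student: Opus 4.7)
The plan is a direct application of Lemma \ref{lem-bounds} once the appropriate surjection onto $N_{h_t/X_t}$ is assembled. By Example 2.4 of \cite{CoskunRiedlsurfaces}, the pair $\{H_1, H_2\}$ is a section-dominating collection for $\eE = aH_1 + bH_2$, so Proposition \ref{prop-secdom} supplies a surjection
$$M_{H_1}^{\oplus s_1} \oplus M_{H_2}^{\oplus s_2} \twoheadrightarrow M_\eE$$
on $\pmn$, with $s_i = h^0(\eE \otimes H_i^\vee)$. Pulling back along $p_2$, invoking the identification $T_{\xX/\pmn} \simeq p_2^*M_\eE$ from Proposition \ref{prop-main}(3), composing with the quotient $h^*T_{\xX/\pmn} \twoheadrightarrow N_{h/\xX}$ from Proposition \ref{prop-main}(2), and restricting to $Y_t$ via $N_{h/\xX}|_{Y_t} \simeq N_{h_t/X_t}$ from Proposition \ref{prop-main}(1), I obtain a surjection of vector bundles
$$(M_{H_1}^{\oplus s_1} \oplus M_{H_2}^{\oplus s_2})|_{Y_t} \twoheadrightarrow N_{h_t/X_t}$$
on the smooth curve $Y_t$.

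The next step is to pare the source down to the shape required by Lemma \ref{lem-bounds} with small $t_1, t_2$. By (\ref{eqn-codimyx}), the target $N_{h_t/X_t}$ has rank only $m+n-2$, which is dwarfed by the source rank $s_1m + s_2n$. I plan to reduce to a surjection
$$M_{H_1}^{\oplus (m+n-2)}|_{Y_t} \oplus M_{H_2}^{\oplus (m+n-2)}|_{Y_t} \twoheadrightarrow N_{h_t/X_t}$$
by selecting generic sub-collections of $m+n-2$ polynomials in each $H^0(\eE \otimes H_i^\vee)$. Fiberwise surjection at a point $p \in Y_t$ for a generic choice follows from a dimension count inside $M_\eE|_p$; to upgrade this to a sheaf-level surjection everywhere on $Y_t$, one uses a Bertini-type argument showing that the locus in $\pmn$ where fiberwise surjection fails has codimension at least two, and is therefore avoided by the curve for generic parameters.

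Finally, applying Lemma \ref{lem-bounds} with $t_1 = t_2 = m+n-2$ yields
$$2g(Y_t) - 2 \geq (a - 2m - n + 1)(Y_t \cdot H_1) + (b - m - 2n + 1)(Y_t \cdot H_2).$$
Under the hypotheses $a \geq 2m+n$ and $b \geq m+2n$, both coefficients are at least $1$, so $2g(Y_t) - 2 \geq (Y_t \cdot H_1) + (Y_t \cdot H_2) = \deg_{H_1+H_2}(Y_t)$, establishing algebraic hyperbolicity with $\varepsilon = 1$.

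The main obstacle is the genericity reduction in the second step: verifying that a generic sub-direct-sum of the specified size remains a sheaf-level surjection onto $N_{h_t/X_t}$ everywhere on $Y_t$, while accounting for the fact that the allowed maps are constrained to arise from multiplication by sections of $H^0(\eE \otimes H_i^\vee)$, rather than from arbitrary bundle maps into $M_\eE$.
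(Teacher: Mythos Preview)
Your approach is essentially identical to the paper's: assemble the surjection via the section-dominating collection and Proposition~\ref{prop-main}, cut down to $s_1=s_2=m+n-2$, and apply Lemma~\ref{lem-bounds}. The paper dispatches your ``main obstacle'' in a single sentence, simply noting that since $\operatorname{rank} N_{h_t/X_t}=m+n-2$ one may take $s_1=s_2=m+n-2$.

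One caution on your proposed justification of that step: the Bertini formulation is not quite right as stated, because the failure locus of surjection onto $N_{h_t/X_t}$ is intrinsic to $Y_t$ (the bundle $N_{h_t/X_t}$ does not extend to $\pmn$), so a codimension-two avoidance argument in the ambient space does not literally apply. The cleaner fix is to argue on the curve directly: at the generic point of $Y_t$ the target is $(m+n-2)$-dimensional, so one may discard summands until at most $m+n-2$ of each type remain while keeping the map generically surjective; the resulting torsion cokernel has non-negative degree, and the degree estimate in the proof of Lemma~\ref{lem-bounds} (via Proposition~2.7 of \cite{CoskunRiedlsurfaces}) only uses the image of $\beta$, so the conclusion goes through unchanged.
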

	
\begin{proof}
	Suppose $X$ is parametrized by $t\in U$ and let $h_t:Y_t\rightarrow X_t$ be a curve in $X$. By Propositions \ref{prop-main} and \ref{prop-secdom}, we have a surjection
	\begin{equation}\label{eqn-secdom}
		\alpha:M_{H_1}\;^{\oplus s_1}\oplus M_{H_2}\;^{\oplus s_2} \longrightarrow M_\eE \longrightarrow N_{h_t/X_t}
	\end{equation} 
	for some integers $s_1,s_2.$ Since rank $N_{h_t/X_t}=m+n-2,$ we can take $s_1=s_2=m+n-2$.
	
	Therefore, by (\ref{eqn-bounds2}), we have
	\begin{equation*}
	    \begin{split}
	        &2g(Y_t)-2\\
	        &\geq ((a-m-1)-(m+n-2))\cdot (Y_t\cdot H_1) + ((b-n-1)-(m+n-2))\cdot (Y_t\cdot H_2)\\
	        &=(a-(2m+n-1))\cdot (Y_t\cdot H_1) + (b-(m+2n-1))\cdot (Y_t\cdot H_2)
        \end{split}
	\end{equation*}
	which is at least $(Y_t\cdot H_1)+(Y_t\cdot H_2)$ when $a\geq 2m+n$ and $b\geq m+2n$.
\end{proof}

\begin{remark}\label{rmk-ein}
    The proof of Proposition \ref{prop-0} is essentially the approach taken by Ein in \cite{Ein, Ein2} to prove that if $X\subseteq\PP^n$ is a generic complete intersection of type $(d_1,\ldots,d_k)$ satisfying $\sum_i d_i\geq 2n-k+1$, then the desingularization of every subvariety of $X$ is of general type. In particular, such varieties $X$ are algebraically hyperbolic. The above proof is also in the same spirit as Theorem 3.6 in \cite{HaaseIlten}, which was proved using focal loci techniques.
\end{remark}

The lower bounds in Proposition \ref{prop-0} are within 1 or 2 of the lower bounds stated in Theorem \ref{thm-pmxpn}(a). In the rest of this section, we present two techniques that improve the bounds in Proposition \ref{prop-0}.

In the proof of Proposition \ref{prop-0}, one sees that it is possible for the surjection (\ref{eqn-secdom}) to be achieved with smaller $s_1,s_2$, which would give us better control of the positivity of $N_{h_t/X_t}$. This motivates the following definition.

\begin{definition}\label{def-s1s2}
    We say that a curve $h_t:Y_t\rightarrow X_t$ is \emph{of type $(s_1,s_2)$} if the induced map 
    \begin{equation*}
        \alpha: M_{H_1}^{\oplus s_1} \oplus M_{H_2}^{\oplus s_2} \longrightarrow N_{h_t/X_t}
    \end{equation*}
    is surjective, with no summand having torsion image. In particular, $s_1+s_2\leq m+n-2$.
\end{definition}

Note that a curve may be of several different types, since the integers $s_1,s_2$ as defined above may not be unique. 

\begin{remark}\label{rem-ClemensRan} (cf \cite{Clemens03, ClemensRan})
    Due to the genericity of the polynomials inducing the map $\alpha$, for a fixed $0\leq u_2\leq s_2$, the rank $\gamma(i)$ of $\alpha\left(M_{H_1}^{\oplus i} \oplus M_{H_2}^{\oplus u_2}\right)$ is a strictly increasing function for $0\leq i \leq s_1$, while the increase $\gamma(i+1)-\gamma(i)$ in rank is a non-decreasing function of $i$  for $0\leq i \leq s_1-1.$ The analogous statement with a fixed $0\leq u_1\leq s_1$ and varying $0\leq j \leq s_2$ is true as well.
\end{remark}

Now, we give an outline of the next two techniques. When $s_1+s_2$ is large, i.e. $s_1+s_2> (m+n-2)/2$, there is a summand $M_{H_i}$ whose image in a quotient of $N_{h_t/X_t}$ induced by $\alpha$ has rank one. In this case, \S\ref{sec-scroll} shows that the curve $h_t:Y_t\rightarrow X_t$ lies in a special surface in $\pmn$ made of ``lines'' (see \S\ref{sec-scroll} for definition) passing through each point of the curve. This surface is described in Lemma \ref{lem-scroll}. This method produces an improvement in the estimate for the positivity of $N_{h_t/X_t}$, which then improves the lower bounds for $a$ and $b$ in Proposition \ref{prop-0} by 1. 

When $s_1$ (or $s_2$) achieves the maximum, i.e. $s_1$ (or $s_2$) equals $m+n-2$, $N_{h_t/X_t}$ modulo torsion is a direct sum of $m+n-2$ rank-one images of $M_{H_1}$ (or $M_{H_2}$) via $\alpha$. In this case, \S\ref{sec-osc} shows that the curve lies in a special locus on the hypersurface, which turns out to be a curve of general type when $a=2m+n-2$ (or $b=m+2n-2$). This method improves the lower bounds for $a$ and $b$ in Proposition \ref{prop-0} by 2 when $m+n\geq 5$. When $(m,n)=(3,1)$, this method improves the lower bound for $b$ in Proposition \ref{prop-0} by 2. 

\subsection{Scroll considerations}\label{sec-scroll}

Let us define a \emph{$\PP^m$-line} (resp. \emph{$\PP^n$-line}) in $\pmn$ to mean an integral curve $L$ whose numerical class is $H_1^{m-1}H_2^n$ (resp. $H_1^{m}H_2^{n-1}$), and a \emph{line} may refer to either a $\PP^m$-line or a $\PP^n$-line. A surface in $\pmn$ is called a \emph{$\PP^m$-scroll} (resp. \emph{$\PP^n$-scroll}) if there is a $\PP^m$-line (resp. $\PP^n$-line) through every point of the surface.

\begin{lemma}[cf Lemma 2.12 in \cite{CoskunRiedlsurfaces}]\label{lem-scroll}
	A rank-one quotient $Q$ of $M_{H_1}\vert_{Y_t}$ induces a \linebreak $\PP^m$-scroll containing $Y_t$ of $\pi_1^*\OO_{\PP^m}(1)$-degree equal to $\deg Q + (Y_t\cdot H_1).$
\end{lemma}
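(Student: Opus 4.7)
The plan is to convert the rank-one quotient $Q$ into geometric data---a rank-$2$ sub-bundle $\mathcal{L} \subset W \otimes \OO_{Y_t}$ with $W := H^0(\PP^m,\OO_{\PP^m}(1))^\vee$, so $\PP^m = \PP(W)$---whose projectivization is a $\PP^1$-bundle over $Y_t$ that realizes the desired scroll. Writing $V = W^\vee = H^0(\pmn,H_1)$ and letting $K \subset M_{H_1}\vert_{Y_t}$ denote the rank-$(m-1)$ kernel of the surjection $M_{H_1}\vert_{Y_t}\twoheadrightarrow Q$, I define $\mathcal{L}$ by dualizing the rank-$2$ cokernel $V \otimes \OO_{Y_t}/K$, yielding an inclusion $\mathcal{L} \hookrightarrow W \otimes \OO_{Y_t}$.

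First I would apply the snake lemma to the diagram
\begin{equation*}
\begin{CD}
0 @>>> K @>>> V\otimes \OO_{Y_t} @>>> \mathcal{L}^\vee @>>> 0 \\
@. @VVV @| @VVV @. \\
0 @>>> M_{H_1}\vert_{Y_t} @>>> V\otimes \OO_{Y_t} @>>> H_1\vert_{Y_t} @>>> 0
\end{CD}
\end{equation*}
(the induced right vertical surjection exists because $K$ lies in the kernel of the evaluation map) to obtain the short exact sequence $0 \to Q \to \mathcal{L}^\vee \to H_1\vert_{Y_t} \to 0$, whose dual $0 \to H_1^\vee\vert_{Y_t} \to \mathcal{L} \to Q^\vee \to 0$ yields $\deg \mathcal{L} = -(Y_t \cdot H_1) - \deg Q$. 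Fiberwise, the sub-line $H_1^\vee\vert_{Y_t} \subset \mathcal{L}$ corresponds to $\langle w_y \rangle \subset \ell_y \subset W$ with $[w_y] = \pi_1(h_t(y))$, so $\PP(\ell_y) \subset \PP^m$ is a line through $\pi_1(h_t(y))$ varying with $y$.

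Next I would realize the scroll as the image $S \subset \pmn$ of the $\PP^1$-bundle $P := \PP(\mathcal{L})$ (lines-in-$\mathcal{L}$ convention) under the map $(y,[v]) \mapsto ([v], \pi_2(h_t(y)))$. By construction $S$ is a $\PP^m$-scroll, and it contains $Y_t$ because the sub-line $H_1^\vee\vert_{Y_t} \hookrightarrow \mathcal{L}$ cuts out a section $Y_t \hookrightarrow P$ whose image in $\pmn$ is $Y_t$ itself. The pullback of $H_1$ to $P$ is the tautological class $\xi = c_1(\OO_P(1))$, and the projective-bundle formula gives $\pi_*\xi^2 = -c_1(\mathcal{L})$, whence $\int_P \xi^2 = -\deg \mathcal{L} = \deg Q + (Y_t \cdot H_1)$. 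The main step to verify is that $P \to S$ is birational, so that the cycle-theoretic pushforward of $[P]$ equals $[S]$ and $\int_P \xi^2$ matches the $\pi_1^*\OO_{\PP^m}(1)$-degree of $S$; this holds whenever the classifying map $y \mapsto [\ell_y] \in \mathrm{Gr}(1,\PP^m)$ is nonconstant, which is ensured by $Q$ being a genuine varying rank-one quotient (any degenerate case is absorbed into the cycle multiplicity of $S$).
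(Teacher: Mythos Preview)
Your proof is correct and follows essentially the same approach as the paper. Your kernel $K$ and rank-$2$ quotient $\mathcal{L}^\vee$ are exactly the paper's $S$ and $Q'$ in its commutative diagram built from the pulled-back Euler sequence; the paper then reads off a map $\gamma:Y_t\to\GG(1,m)$ from the rank-$2$ quotient $Q'$ of $\OO_{Y_t}^{m+1}$ and lifts it to the $\PP^m$-scroll, while you dualize and realize the same scroll as the image of $\PP(\mathcal{L})$. The paper's one-line ``follows from taking degrees'' is precisely your snake-lemma sequence $0\to Q\to \mathcal{L}^\vee\to H_1\vert_{Y_t}\to 0$, giving $\deg Q'=\deg Q+(Y_t\cdot H_1)$; your projective-bundle computation $\int_P\xi^2=-\deg\mathcal{L}$ just makes explicit the standard fact that the $\OO_{\PP^m}(1)$-degree of a family of lines equals the degree of the defining rank-$2$ quotient. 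One small caveat: your identification $(\mathcal{L}^\vee)^\vee=\mathcal{L}$ and the degree bookkeeping assume $\mathcal{L}^\vee$ is locally free, i.e.\ that $Q$ is torsion-free; the paper's quotient-side formulation sidesteps this, though in all applications $Q$ is indeed a line bundle.
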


\begin{proof}
	Recall the Euler sequence on $\PP^m$:
	$$0 \rightarrow \Omega_{\PP^m}(1)\rightarrow \OO^{m+1}_{\PP^m}\rightarrow \OO_{\PP^m}(1)\rightarrow 0.$$
	The pullback of $\Omega_{\PP^m}(1)$ via $\pi_1$ is isomorphic to the Lazarsfeld-Mukai bundle $M_{H_1}$. By pulling back the above short exact sequence via $\pi_1$ and then restricting to the curve $Y_t\subseteq X_t$, we get 
	$$0 \rightarrow M_{H_1}\vert_{Y_t} \rightarrow H^0(\pmn,\pi_1^*\OO_{\PP^m}(1))\otimes \OO_{Y_t} \rightarrow \pi_1^*\OO_{\PP^m}(1)\vert_{Y_t} \rightarrow 0.$$
	We define $S$ and $Q'$ to be the sheaves that make this commutative diagram exact:
	\begin{center}
		\begin{tikzcd}
		&0 \arrow[dr] &0 \arrow[d]\\
		& &S \arrow[d] \arrow[dr] & &0\\
		&0 \arrow[r] &M_{H_1}\vert_{Y_t} \arrow[r] \arrow[d] &\OO_{Y_t}^{m+1} \arrow[r] \arrow[dr, "(*)"] &\pi_1^*\OO_{\PP^m}(1)\vert_{Y_t} \arrow[r] \arrow[u] &0\\
		& &Q \arrow[d] & &Q'\arrow[u] \arrow[dr]\\
		& &0 & & &0
		\end{tikzcd}
	\end{center}
	The triangle ($*$) induces a map $\gamma: Y_t\rightarrow \GG(1,m)$ that sends a point $p\in Y_t$ to a line in $\PP^m$ given by the rank-two vector bundle $Q'$, which contains $\pi_1(p)$. $\gamma$ lifts to a map that sends $p\in Y_t$ to the $\PP^m$-line $\pi_1^{-1}(\gamma(p))\cap \pi_2^{-1}(\pi_2(p))$ which contains $p$, giving a $\PP^m$-scroll containing $Y_t$. The $\pi_1^*\OO_{\PP^m}(1)$-degree of this surface follows from taking degrees in the above diagram.
\end{proof}

The following is the analogous result with $H_1$ and $H_2$ switched.

\begin{lemma}\label{lem-scroll2}
	A rank-one quotient $Q$ of $M_{H_2}\vert_{Y_t}$ induces a $\PP^n$-scroll containing $Y_t$ of \linebreak $\pi_2^*\OO_{\PP^n}(1)$-degree equal to $\deg Q + (Y_t\cdot H_2).$
\end{lemma}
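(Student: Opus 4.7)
The proof will be a direct analogue of Lemma \ref{lem-scroll}, swapping the roles of $H_1$ and $H_2$ and of the two projective factors. The plan is to replay the same diagram chase with the Euler sequence on the second factor $\PP^n$.

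First, I would start from the Euler sequence
\begin{equation*}
0 \rightarrow \Omega_{\PP^n}(1) \rightarrow \OO_{\PP^n}^{n+1} \rightarrow \OO_{\PP^n}(1) \rightarrow 0
\end{equation*}
on $\PP^n$, pull it back via $\pi_2$, and use that $\pi_2^*\Omega_{\PP^n}(1) \simeq M_{H_2}$. Restricting to $Y_t$ yields
\begin{equation*}
0 \rightarrow M_{H_2}\vert_{Y_t} \rightarrow H^0(\pmn,\pi_2^*\OO_{\PP^n}(1)) \otimes \OO_{Y_t} \rightarrow \pi_2^*\OO_{\PP^n}(1)\vert_{Y_t} \rightarrow 0.
\end{equation*}

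Next, given the rank-one quotient $M_{H_2}\vert_{Y_t} \twoheadrightarrow Q$, let $S$ denote the kernel and form the same commutative diagram as in the proof of Lemma \ref{lem-scroll}, producing a rank-two quotient $Q'$ of $\OO_{Y_t}^{n+1}$ that fits into the triangle analogous to $(*)$. The quotient $Q'$ defines a morphism $\gamma: Y_t \rightarrow \GG(1,n)$ sending $p \in Y_t$ to a line in $\PP^n$ containing $\pi_2(p)$. Lifting, one sends $p$ to the curve $\pi_2^{-1}(\gamma(p)) \cap \pi_1^{-1}(\pi_1(p))$, which is a $\PP^n$-line in $\pmn$ through $p$ with class $H_1^m H_2^{n-1}$. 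The union of these lines over $p \in Y_t$ is a $\PP^n$-scroll containing $Y_t$.

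Finally, I would compute the $\pi_2^*\OO_{\PP^n}(1)$-degree of the scroll by taking degrees along the diagram: the degree of $Q'$ on $Y_t$ equals $\deg(\pi_2^*\OO_{\PP^n}(1)\vert_{Y_t}) + \deg Q = (Y_t \cdot H_2) + \deg Q$, which is the $\pi_2^*\OO_{\PP^n}(1)$-degree of the resulting scroll since that degree is swept out linearly along the lifted $\PP^n$-lines. There is no real obstacle here beyond transcribing the previous proof with the indices swapped; the only thing to verify is that $\pi_2^*\Omega_{\PP^n}(1) \simeq M_{H_2}$, which is immediate because $M_{H_2}$ is by construction the pullback via $\pi_2$ of the Lazarsfeld-Mukai bundle of $\OO_{\PP^n}(1)$, i.e.\ of $\Omega_{\PP^n}(1)$.
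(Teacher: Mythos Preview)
Your proposal is correct and is exactly the argument the paper has in mind: the paper does not write out a separate proof for this lemma but simply declares it to be the analogue of Lemma~\ref{lem-scroll} with $H_1$ and $H_2$ (and the two projective factors) interchanged. Your transcription of the Euler-sequence diagram chase, the resulting map $\gamma:Y_t\to\GG(1,n)$, and the degree computation $\deg Q'=(Y_t\cdot H_2)+\deg Q$ are all the intended steps.
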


To give an idea of their application to the proof of Theorem \ref{thm-pmxpn}, suppose that $h_t:Y_t\rightarrow X_t$ is a curve of type $(s_1,s_2)=(m+n-2,0)$. Then the map
$$M_{H_1}\rightarrow N_{h_t/X_t}\Big/\alpha(M_{H_1}^{\oplus m+n-3})$$
induced by (\ref{eqn-secdom}) has a rank-one image $Q$, which by Lemma \ref{lem-scroll} produces a $\PP^m$-scroll containing the curve whose intersection number with the class $H_1^2$ equals $\deg Q + (Y_t\cdot H_1)$. By comparing the numerical classes of the curve, the hypersurface and the $\PP^m$-scroll, we obtain a lower bound for $\deg Q$, which in turn gives the following lower bound for $\deg N_{h_t/X_t}$ by (\ref{eqn-bounds1}):
\begin{equation*}
    \deg N_{h_t/X_t}\geq \deg Q + \deg \alpha(M_{H_1}^{\oplus m+n-3})\geq \deg Q -(m+n-3)\cdot (Y_t\cdot H_1).
\end{equation*}

\begin{remark}\label{rmk-lines}
    The above scroll considerations were first introduced in \cite{CoskunRiedl} to show that a curve $C$ in a very general degree $d$ surface $X\subseteq\PP^3$ satisfies the inequality 
    $$ 2g(C)-2\geq \left(d-5+\frac{1}{d}\right)\cdot (C\cdot H),$$ hence $X$ is algebraically hyperbolic if $d\geq 5$. Similarly, when applied to curves $C$ contained in very general degree $d$ hypersurfaces $X\subseteq\PP^n$ of higher dimensions, the scroll method yields the inequality
    $$ 2g(C)-2\geq \left(d-(2n-1)+\frac{1}{d}\right)\cdot (C\cdot H).$$ Hence, it gives an alternate proof (see Theorem \ref{thm-pn}) that $X$ is algebraically hyperbolic if $d\geq 2n-1$ and $n\geq 4$, which is a result first obtained in \cite{Voisin, Voisincorrection}. Prior to \cite{CoskunRiedl}, Clemens-Ran used a similar scroll construction in \S4 of \cite{ClemensRan} to discount the existence of curves of genus $\leq 2$ in very general sextic 3-folds $X\subseteq\PP^4$, in the case where there is a surjection from $M_H$ onto the normal bundle of the curve in $X$.
\end{remark}

\subsection{Osculating lines}\label{sec-osc}

A $\PP^m$-line (resp. $\PP^n$-line) in $\pmn$ is called an \emph{osculating $\PP^m$-line} (resp. \emph{osculating $\PP^n$-line}) of $X_t$ if it intersects $X_t$ in exactly one point. We denote by $\Lambda^{\PP^m}_a\vert_t$ (resp. $\Lambda^{\PP^n}_b\vert_t$) the locus on $X_t$ swept out by osculating $\PP^m$-lines (resp. osculating $\PP^n$-lines). First, we show the following by using modifications of Lemmas 2.10--11 in \cite{CoskunRiedlsurfaces}:
\begin{enumerate}
    \item When $a=2m+n-2$ and $s_1=m+n-2$, the curve $h_t: Y_t\rightarrow X_t$ lies in $\Lambda^{\PP^m}_a\vert_t$.
    \item When $b=m+2n-2$ and $s_2=m+n-2$, the curve $h_t: Y_t\rightarrow X_t$ lies in $\Lambda^{\PP^n}_b\vert_t$.
\end{enumerate}
Then, we show that $\Lambda^{\PP^m}_a\vert_t$ and $\Lambda^{\PP^n}_b\vert_t$ are curves of general type when $a,b$ are large enough. Note that the hypothesis of the following lemma is equivalent to the condition that the curve $h_t:Y_t\rightarrow X_t$ is of type $(s_1,s_2)=(m+n-2,0)$ and $m+n\geq4.$

\begin{lemma}[cf Lemma 2.10 in \cite{CoskunRiedlsurfaces}]\label{lem-2.10}
    Suppose that at a general point $(p,t) \in \yY$,
    \begin{enumerate}
        \item $M_{H_1}\rightarrow N_{h_t/X_t}$ has rank-one image for a generic polynomial in $H^0(\eE\otimes H_1^\vee)$ and is not surjective, and 
        \item $M_{H_2}\rightarrow N_{h_t/X_t}$ has torsion image for a generic polynomial in $H^0(\eE\otimes H_2^\vee).$
    \end{enumerate}
    Then there exists a $\PP^m$-line $p\in \ell\subseteq \pmn$ whose ideal $H^0(\eE\otimes \II_\ell)$ is contained in $T_{\yY/\pmn}\vert_{(p,t)}$. Moreover, there is only one such $\PP^m$-line $\ell$, and it is the line induced by the generic rank-one image $Q$ of $M_{H_1}$ in $N_{h_t/X_t}$, as in Lemma \ref{lem-scroll}. 
\end{lemma}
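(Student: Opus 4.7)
The plan is to construct the candidate $\PP^m$-line $\ell$ directly via Lemma~\ref{lem-scroll}, decompose any section in $H^0(\eE\otimes\II_\ell)$ using the explicit form of this ideal, and check that each piece vanishes modulo $T_{\yY/\pmn}|_{(p,t)}$. Using Proposition~\ref{prop-main}(3), identify $T_{\xX/\pmn}|_{(p,t)}$ with $M_\eE|_p = H^0(\eE\otimes\II_p)$, so that $N_{h_t/X_t}|_p$ becomes $H^0(\eE\otimes\II_p)/T_{\yY/\pmn}|_{(p,t)}$ and the desired containment $H^0(\eE\otimes\II_\ell)\subseteq T_{\yY/\pmn}|_{(p,t)}$ reduces to showing every $F\in H^0(\eE\otimes\II_\ell)$ maps to zero in $N_{h_t/X_t}|_p$.

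For the construction, fix a generic $F_1\in H^0(\eE\otimes H_1^\vee)$ so that, by hypothesis (1), multiplication by $F_1$ induces a map $\mu_{F_1}: M_{H_1}|_{Y_t}\to N_{h_t/X_t}$ with rank-one image $Q$ and rank-$(m-1)$ kernel $S$. Then $S|_p$ is an $(m-1)$-dimensional subspace of $M_{H_1}|_p = H^0(H_1\otimes\II_{\pi_1(p)})$; its common zero locus in $\PP^m$ is a line $L$ through $\pi_1(p)$, and by Lemma~\ref{lem-scroll} the associated $\PP^m$-line is $\ell = L\times\{\pi_2(p)\}$. Choosing coordinates $(x_0,\ldots,x_m)$ on $\PP^m$ with $L = V(x_2,\ldots,x_m)$ and $(y_0,\ldots,y_n)$ on $\PP^n$ with $\pi_2(p)=[1:0:\cdots:0]$, the ideal $\II_\ell$ is generated by $x_2,\ldots,x_m,y_1,\ldots,y_n$, where $\{x_i\}_{i\geq 2}$ and $\{y_j\}_{j\geq 1}$ form bases of $S|_p$ and $M_{H_2}|_p$ respectively. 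Standard cohomology vanishings on $\pmn$ then yield a decomposition $F = \sum_{i=2}^{m} x_i G_i + \sum_{j=1}^{n} y_j H_j$ for any $F\in H^0(\eE\otimes\II_\ell)$, with $G_i\in H^0(\eE\otimes H_1^\vee)$ and $H_j\in H^0(\eE\otimes H_2^\vee)$.

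It remains to verify that each term in this decomposition maps to zero in $N_{h_t/X_t}|_p$. For the $y_j H_j$ pieces, hypothesis (2) says $M_{H_2}\to N_{h_t/X_t}$ has torsion image for generic $F_2\in H^0(\eE\otimes H_2^\vee)$, so the pairing $M_{H_2}|_p\otimes H^0(\eE\otimes H_2^\vee)\to N_{h_t/X_t}|_p$ vanishes on a Zariski-dense set of $F_2$; linearity of the pairing in the second variable then extends the vanishing to all $H_j$, killing $\sum y_j H_j$. For the $x_i G_i$ pieces, $x_i\cdot F_1=0$ in $N_{h_t/X_t}|_p$ by construction (since $x_i\in S|_p=\ker(\mu_{F_1}|_p)$); the same bilinearity-and-density argument, now applied under hypothesis (1), extends this to $x_i\cdot G=0$ for all $G\in H^0(\eE\otimes H_1^\vee)$, killing the first sum as well, so $F\in T_{\yY/\pmn}|_{(p,t)}$. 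For uniqueness, suppose $\ell'=L'\times\{\pi_2(p)\}$ is another $\PP^m$-line through $p$ with $H^0(\eE\otimes\II_{\ell'})\subseteq T_{\yY/\pmn}|_{(p,t)}$; then for each $l'\in H^0(H_1\otimes\II_{L'})$ the product $l'\cdot F_1$ lies in $H^0(\eE\otimes\II_{\ell'})\subseteq T_{\yY/\pmn}|_{(p,t)}$, forcing $l'\in S|_p$. Since both $H^0(H_1\otimes\II_{L'})$ and $S|_p$ are $(m-1)$-dimensional subspaces of $M_{H_1}|_p$, they agree, hence $L'=L$ and $\ell'=\ell$.

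The main obstacle is the bilinearity-and-density promotion in the $x_i G_i$ analysis: extending ``$x_i\cdot F_1=0$ for the chosen generic $F_1$'' to ``$x_i\cdot G=0$ for all $G\in H^0(\eE\otimes H_1^\vee)$''. This step requires interpreting hypothesis (1) so that the codimension-one kernel $S|_p$ is canonical, independent of the particular generic $F_1$---equivalently, that the bilinear form $M_{H_1}|_p\otimes H^0(\eE\otimes H_1^\vee)\to N_{h_t/X_t}|_p$ admits a well-defined left kernel of codimension one which then necessarily equals $S|_p$. Once that point is pinned down, the decomposition of $H^0(\eE\otimes\II_\ell)$ in terms of the linear generators of $\II_\ell$ is routine modulo standard cohomology vanishings on $\pmn$.
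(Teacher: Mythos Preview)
Your approach is essentially the paper's: both construct $\ell$ from the kernel $S\subseteq M_{H_1}|_p$ of the generic multiplication map, identify $N_{h_t/X_t}|_p$ with $M_\eE|_p/T_{\yY/\pmn}|_{(p,t)}$ via Proposition~\ref{prop-main}, and then observe that the image of
\[
\bigl(S\otimes H^0(\eE\otimes H_1^\vee)\bigr)\;\oplus\;\bigl(M_{H_2}|_p\otimes H^0(\eE\otimes H_2^\vee)\bigr)\;\longrightarrow\; H^0(\eE\otimes\II_p)
\]
is exactly $H^0(\eE\otimes\II_\ell)$ and lands in $T_{\yY/\pmn}|_{(p,t)}$. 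The obstacle you isolate---that the kernel $S|_p$ must be independent of the generic multiplier $F_1$, so that $x_i\cdot G=0$ for \emph{all} $G\in H^0(\eE\otimes H_1^\vee)$---is precisely the one nontrivial input the paper does not prove in-line either: it cites Lemma~2.2(i) of \cite{Clemens03}, which says that when the generic map $M_{H_1}\to N_{h_t/X_t}$ is \emph{not surjective}, its kernel is independent of the generic polynomial. This is exactly why the clause ``and is not surjective'' appears in hypothesis~(1); once you invoke that lemma your argument is complete.

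For uniqueness the paper takes a slightly different route than yours: it observes that if two distinct $\PP^m$-lines $\ell_1,\ell_2$ through $p$ both had $H^0(\eE\otimes\II_{\ell_i})\subseteq T_{\yY/\pmn}|_{(p,t)}$, then since these two ideals together span all of $H^0(\eE\otimes\II_p)=T_{\xX/\pmn}|_{(p,t)}$ one would get $T_{\yY/\pmn}|_{(p,t)}=T_{\xX/\pmn}|_{(p,t)}$, contradicting $\operatorname{rank} N_{h_t/X_t}=m+n-2>0$. Your argument (forcing $H^0(H_1\otimes\II_{L'})\subseteq S|_p$ via $l'\cdot F_1\in H^0(\eE\otimes\II_{\ell'})$ and then comparing dimensions) is also valid and has the mild advantage of pinning $\ell$ to the specific line produced by Lemma~\ref{lem-scroll} without a separate spanning claim.
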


\begin{proof}
    Consider the map $M_{H_1}\rightarrow N_{h_t/X_t}$ induced by a generic polynomial in $H^0(\eE\otimes H_1^\vee)$ at the point $p\in Y_t$. Since the map is not surjective, its kernel is independent of the generic polynomial by Lemma 2.2(i) in \cite{Clemens03}. We denote its kernel by $S\subseteq H^0(H_1\otimes \II_p)$ and its image by $Q\subseteq N_{h_t/X_t}\vert_p$.
    By Proposition \ref{prop-main}, $N_{h_t/X_t}\vert_p$ is isomorphic to the quotient $M_\eE\vert_p\;\Big/\;T_{\yY/\pmn}\vert_{(p,t)}.$ Hence by the above assumptions, $T_{\yY/\pmn}\vert_{(p,t)}$ contains the image of the map
    $$\left(S \otimes H^0(\eE\otimes H_1^\vee)\right) \oplus  \left(H^0(H_2\otimes \II_p)\otimes H^0(\eE\otimes H_2^\vee) \right)\rightarrow H^0(\eE\otimes \II_p)\simeq M_\eE\vert_p,$$
    which is the ideal of the $\PP^m$-line containing $p$ that is given by Lemma \ref{lem-scroll}.
    
    Uniqueness follows from a dimension count. We can view $T_{\yY/\pmn}\vert_{(p,t)}\subseteq T_{\xX/\pmn}\vert_{(p,t)}$ as spaces of polynomials. If there are two distinct $\PP^m$-lines whose ideals are contained in $T_{\yY/\pmn}\vert_{(p,t)}$, then we would have $T_{\yY/\pmn}\vert_{(p,t)}=T_{\xX/\pmn}\vert_{(p,t)}$, a contradiction. 
\end{proof}

The following is the analogous result with $H_1$ and $H_2$ switched.

\begin{lemma}\label{lem-2.102}
    Suppose that at a general point $(p,t) \in \yY$,
    \begin{enumerate}
        \item $M_{H_1}\rightarrow N_{h_t/X_t}$ has torsion image for a generic polynomial in $H^0(\eE\otimes H_1^\vee),$ and 
        \item $M_{H_2}\rightarrow N_{h_t/X_t}$ has rank-one image for a generic polynomial in $H^0(\eE\otimes H_2^\vee)$ and is not surjective.
    \end{enumerate}
    Then there exists a $\PP^n$-line $p\in \ell\subseteq \pmn$ whose ideal $H^0(\eE\otimes \II_\ell)$ is contained in $T_{\yY/\pmn}\vert_{(p,t)}$. Moreover, there is only one such $\PP^n$-line $\ell$, and it is the line induced by the generic rank-one image $Q$ of $M_{H_2}$ in $N_{h_t/X_t}$, as in Lemma \ref{lem-scroll2}. 
\end{lemma}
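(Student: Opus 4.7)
The plan is to mirror the proof of Lemma \ref{lem-2.10} verbatim, with the roles of $H_1$ and $H_2$ swapped, since all ingredients used there (the Clemens genericity result on kernels, Proposition \ref{prop-main}, and the scroll lemmas) are symmetric in the two factors. Concretely, fix a generic polynomial in $H^0(\eE\otimes H_2^\vee)$ and consider the induced map $M_{H_2}\rightarrow N_{h_t/X_t}$ at the point $p\in Y_t$. By hypothesis (2), this map is not surjective and has rank-one image, so Lemma 2.2(i) in \cite{Clemens03} applies to show its kernel $S\subseteq H^0(H_2\otimes \II_p)$ is independent of the choice of generic polynomial. Denote the rank-one image by $Q\subseteq N_{h_t/X_t}\vert_p$.

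Next, I would use Proposition \ref{prop-main} to identify $N_{h_t/X_t}\vert_p \simeq M_\eE\vert_p \big/ T_{\yY/\pmn}\vert_{(p,t)}$. By hypothesis (1), the $H_1$-branch contributes only torsion, so its entire contribution $H^0(H_1\otimes \II_p)\otimes H^0(\eE\otimes H_1^\vee)$ lies in $T_{\yY/\pmn}\vert_{(p,t)}$. Combining this with the kernel $S$ from the $H_2$-branch, the image of
$$\left(H^0(H_1\otimes \II_p)\otimes H^0(\eE\otimes H_1^\vee)\right)\oplus \left(S\otimes H^0(\eE\otimes H_2^\vee)\right)\longrightarrow H^0(\eE\otimes \II_p)\simeq M_\eE\vert_p$$
lies inside $T_{\yY/\pmn}\vert_{(p,t)}$. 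By Lemma \ref{lem-scroll2}, this image is exactly the ideal $H^0(\eE\otimes \II_\ell)$ of the $\PP^n$-line $\ell$ through $p$ determined by the rank-one quotient $Q$ of $M_{H_2}\vert_{Y_t}$, establishing the existence claim.

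For uniqueness, I would run the dimension count as in the proof of Lemma \ref{lem-2.10}: both $T_{\yY/\pmn}\vert_{(p,t)}$ and $T_{\xX/\pmn}\vert_{(p,t)}$ can be regarded as linear subspaces of $H^0(\eE\otimes \II_p)$, with the former cut out by additional conditions whose codimension equals $\codim(\yY\subseteq \xX) = m+n-2$ from \eqref{eqn-codimyx}. Were there two distinct $\PP^n$-lines $\ell\neq \ell'$ through $p$ with ideals inside $T_{\yY/\pmn}\vert_{(p,t)}$, the span of the two ideals would have codimension strictly less than $m+n-2$ in $T_{\xX/\pmn}\vert_{(p,t)}$, forcing $T_{\yY/\pmn}\vert_{(p,t)} = T_{\xX/\pmn}\vert_{(p,t)}$, contradicting the strict containment. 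The main (modest) obstacle is just bookkeeping the symmetry carefully, in particular verifying that the codimension comparison used for uniqueness is insensitive to the swap $H_1 \leftrightarrow H_2$; this is automatic from \eqref{eqn-codimyx}, which already treats the two factors symmetrically.
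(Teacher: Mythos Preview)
Your proposal is correct and takes essentially the same approach as the paper, which states Lemma~\ref{lem-2.102} as the analogue of Lemma~\ref{lem-2.10} with $H_1$ and $H_2$ switched and provides no separate proof. The mirrored argument you outline is exactly what is intended; the only cosmetic point is that in the uniqueness step the paper asserts directly that the span of two distinct line ideals through $p$ is all of $T_{\xX/\pmn}\vert_{(p,t)}$, whereas your weaker codimension inequality already suffices for the contradiction.
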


The following result is a generalization of Lemma 2.11 in \cite{CoskunRiedlsurfaces}.

\begin{lemma}[cf Lemma 2.11 in \cite{CoskunRiedlsurfaces}]\label{lem-2.11}
    Let $(p,t)\in \yY$ be a general point and let \linebreak $Z:=p_2^{-1}(p).$ Let $T\subseteq \pmn$ be a subvariety containing $p$ whose ideal $H^0(\eE\otimes \II_T)$ is contained in $T_{\yY/\pmn}\vert_{(p,t)}.$ Then $W:=h(\yY)\cap Z$ is a union of fibers of the restriction map $\beta:Z\rightarrow H^0(\eE\vert_T).$ In particular, $W$ contains the fiber containing $(p,t),$ which is the affine space $(p,t)+H^0(\eE\otimes \II_T).$
\end{lemma}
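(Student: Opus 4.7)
The plan is to translate everything into affine / tangent-space data on $Z$, verify the tangent containment supplied by the hypothesis, and integrate it via a Frobenius-type argument combined with closedness of $W$ in $Z$.

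First, using the étale map $U \to V$, pick $F_0 \in V$ representing $t$; since $(p,t) \in \yY \subseteq \xX$, $F_0(p)=0$. This identifies $Z = p_2^{-1}(p)$ (locally) with $\{F \in V : F(p) = 0\} \simeq H^0(\eE \otimes \II_p)$. Because $p \in T$, the kernel of $\beta$ is $H^0(\eE \otimes \II_T)$, so the fibers of $\beta$ are affine translates of $H^0(\eE \otimes \II_T)$; the fiber through $(p,t)$ is $L := (p,t) + H^0(\eE \otimes \II_T)$, proving the ``in particular'' description.

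Next, I would apply Proposition \ref{prop-main}(3) to identify $T_{\xX/\pmn}|_{(p,t)} \simeq M_\eE|_p \simeq H^0(\eE \otimes \II_p) = T_{(p,t)} Z$. Generic injectivity of $h$ embeds $T_{\yY/\pmn}|_{(p,t)}$ into $T_{\xX/\pmn}|_{(p,t)}$, so the hypothesis reads $T_L|_{(p,t)} = H^0(\eE \otimes \II_T) \subseteq T_{\yY/\pmn}|_{(p,t)}$. Locally $W = h(\yY_p)$ where $\yY_p := (p_2 \circ h)^{-1}(p)$ has tangent space $T_{\yY/\pmn}|_{(p,t)}$ at $(p,t)$ by definition, yielding $T_{(p,t)} W \supseteq T_L|_{(p,t)}$.

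To promote this tangent containment to the set-theoretic statement $L \subseteq W$ and, more generally, to $W = \beta^{-1}(\beta(W))$, I observe that the inclusion $H^0(\eE \otimes \II_T) \subseteq T_{\yY/\pmn}$ is a generic condition on $W$; since it holds at the general point $(p,t)$, it holds on a dense open subset $W^0 \subseteq W$. The constant distribution $\DD \subseteq TZ$ given by $H^0(\eE \otimes \II_T)$ is then tangent to $W$ along $W^0$, and as $\DD$ is trivially involutive with integral leaves equal to the fibers of $\beta$, each such leaf through a point of $W^0$ is locally contained in $W$. Properness of $\yY \to U$ (its fibers being projective curves) makes $h(\yY)$ closed in $\xX$, hence $W$ is closed in $Z$, so the affine-linear leaves lie fully in $W$, and closedness extends the union of leaves from $W^0$ to all of $W$. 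The main obstacle is precisely this integration step: pointwise tangent containment at a single point does not in general imply set-theoretic containment, so the argument depends crucially on propagating the tangent condition to an open dense subset of $W$, combined with the closedness of $W$ in $Z$.
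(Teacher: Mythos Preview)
Your approach is essentially the correct unpacking of what the paper does: the paper's own proof is a two-line appeal to Lemma~2.11 of \cite{CoskunRiedlsurfaces}, so there is no independent argument in the paper to compare against beyond noting that the fibers of $\beta$ are the affine translates of $H^0(\eE\otimes\II_T)$. Your identification of $Z$ with $H^0(\eE\otimes\II_p)$ via Proposition~\ref{prop-main}(3), the Frobenius-type integration along the constant distribution $H^0(\eE\otimes\II_T)$, and the closedness of $W$ via properness of $\yY\to U$ are all the right ingredients.

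The one step that deserves sharpening is the propagation you flag as ``the main obstacle.'' Saying the inclusion is ``a generic condition on $W$'' is imprecise, and as written it is not clear why a tangent containment for the \emph{fixed} subspace $H^0(\eE\otimes\II_T)$ at the single point $(p,t)$ should persist at nearby points of $W$. What makes this work is twofold. First, $(p,t)$ being general in $\yY$ implies it is a general point of the irreducible component $\yY_p^0$ of the fiber $\yY_p$ containing it (a standard fact about general fibers of a dominant morphism). Second, the failure locus of the containment $H^0(\eE\otimes\II_T)\subseteq T_{\yY/\pmn}$ is the support of the image of the sheaf map $H^0(\eE\otimes\II_T)\otimes\OO_{\yY}\to N_{h/\xX}$, which is Zariski closed on the locus where $N_{h/\xX}$ is locally free; since $(p,t)$ is general, this closed set misses the general point of $\yY_p^0$ and hence is nowhere dense there. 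Together these give the containment on an open dense subset of the component through $(p,t)$, after which your foliation and closedness arguments go through. Note that strictly speaking this handles only the component of $W$ through $(p,t)$, which is all that is needed for the ``in particular'' clause used downstream in Lemma~\ref{lem-osc}.
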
 

\begin{proof}
    The tangent space to a fiber of the above restriction map $\beta$ is $H^0(\eE\otimes \II_T)$ at every point of the fiber, so the fiber containing $(p,t)$ is the affine space $(p,t)+H^0(\eE\otimes \II_T)$. Since $H^0(\eE\otimes \II_T)\subseteq T_{\yY/\pmn}\vert_{(p,t)},$ Lemma 2.11 in \cite{CoskunRiedlsurfaces} implies that $W$ is a union of fibers of the restriction map $\beta:Z\rightarrow H^0(\eE\vert_T).$ Therefore, $W$ contains the fiber containing $(p,t),$ which is $(p,t)+H^0(\eE\otimes \II_T)$.
\end{proof}

Now, we will use Lemmas \ref{lem-2.10} and \ref{lem-2.11} to prove that if $a=2m+n-2$, then a curve of type $(s_1,s_2)=(m+n-2,0)$ lies in the osculation locus $\Lambda^{\PP^m}_a\vert_t\subseteq X_t$.

\begin{lemma}[cf \S2 in \cite{Voisincorrection}, \S3.3 in \cite{Pacienza}]\label{lem-osc}
    Suppose that $a=2m+n-2$, and that at a general point $(p,t) \in \yY$,
    \begin{enumerate}
        \item $M_{H_1}\rightarrow N_{h_t/X_t}$ has rank-one image for a generic polynomial in $H^0(\eE\otimes H_1^\vee)$ and is not surjective, and 
        \item $M_{H_2}\rightarrow N_{h_t/X_t}$ has torsion image for a generic polynomial in $H^0(\eE\otimes H_2^\vee).$
    \end{enumerate}
    Then the curve $h_t:Y_t\rightarrow X_t$ lies in the locus $\Lambda^{\PP^m}_a\vert_t\subseteq X_t$.
\end{lemma}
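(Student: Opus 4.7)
The plan is to combine Lemmas \ref{lem-2.10} and \ref{lem-2.11} to produce a deformation family of hypersurfaces containing a fixed divisor on $\ell$, and then carry out a dimension count on the restriction map to $\ell$ in the style of Voisin and Pacienza.

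First, I would apply Lemma \ref{lem-2.10} at the general point $(p,t)\in\yY$. Under hypotheses (1) and (2), there is a unique $\PP^m$-line $\ell$ through $p$ with $H^0(\eE\otimes\II_\ell)\subseteq T_{\yY/\pmn}|_{(p,t)}$, and this $\ell$ coincides with the scroll line of Lemma \ref{lem-scroll}. Applying Lemma \ref{lem-2.11} with $T:=\ell$ then shows that the fiber $(p,t)+H^0(\eE\otimes\II_\ell)$ of the restriction map $\beta\colon Z\to H^0(\eE|_\ell)$ is contained in $W=h(\yY)\cap Z$. Translated back, this says that for every $F\in H^0(\eE\otimes\II_\ell)$, the hypersurface $X_{t+F}$ contains a deformed curve $Y_{t+F}$ still passing through $p$.

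The crucial geometric observation is that every such $F$ vanishes identically on $\ell$, so $(F_t+F)|_\ell=F_t|_\ell$, and hence the divisor
$$D:=X_t\cap\ell=X_{t+F}\cap\ell$$
on $\ell\cong\PP^1$ is a fixed effective divisor of degree $a=2m+n-2$ containing $p$. The goal is to show $D=a\cdot p$, for this says exactly that $\ell\cap X_t=\{p\}$ set-theoretically, which is the definition of $\ell$ being an osculating $\PP^m$-line of $X_t$ at $p$; then $p\in\Lambda^{\PP^m}_a|_t$, and letting $p$ range over a dense open subset of $Y_t$ yields $Y_t\subseteq\Lambda^{\PP^m}_a|_t$.

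To prove $D=a\cdot p$ I would follow the arguments in \S 2 of \cite{Voisincorrection} and \S 3.3 of \cite{Pacienza} and examine the restriction map
$$\rho\colon T_{\yY/\pmn}|_{(p,t)}\longrightarrow H^0(\eE|_\ell\otimes\II_p),\qquad F\longmapsto F|_\ell,$$
whose kernel coincides with $H^0(\eE\otimes\II_\ell)$ by Lemma \ref{lem-2.10}. Using $\dim T_{\yY/\pmn}|_{(p,t)}=\dim V-m-n+1$ and $h^0(\eE\otimes\II_\ell)=\dim V-a-1$, one computes
$$\dim\Image\rho\;\leq\;a-m-n+2\;=\;m$$
for $a=2m+n-2$. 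Thus the affine family of degree-$a$ divisors $(F_t+G)|_\ell$ on $\ell\cong\PP^1$ parametrized by $G$ in the image of $\rho$ is at most $m$-dimensional and has $p$ as a fixed point of the base locus. The heart of the proof, and the main obstacle, is upgrading this bound into the assertion that the base divisor of the family is exactly $a\cdot p$. I expect this to follow from the genericity of the defining polynomials in Remark \ref{rem-ClemensRan} (which controls the possible base points of $\Image\rho$ away from $p$) combined with the hypothesis $m+n\geq 4$, which makes the numerology in the argument of Voisin and Pacienza tight enough to force all $a$ zeros of $F_t|_\ell$ to collapse onto $p$, completing the proof.
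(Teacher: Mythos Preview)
Your setup through the dimension count is correct and matches the paper: you invoke Lemma~\ref{lem-2.10} to get the line $\ell$, apply Lemma~\ref{lem-2.11} with $T=\ell$, and compute $\dim\Image\rho=a-(m+n-2)=m$. The gap is in the last paragraph, where you do not actually prove that $F_t|_\ell$ has a single zero. There is also a conceptual slip: what you need is that the \emph{specific section} $F_t|_\ell$ equals (a scalar times) $X_1^a$, not that the \emph{base divisor} of the family $\Image\rho$ is $a\cdot p$. Indeed the latter is impossible, since $\dim\Image\rho=m\geq 2$ forces the base locus to have degree $<a$. And Remark~\ref{rem-ClemensRan} is about how ranks of images of $M_{H_i}$ in $N_{h_t/X_t}$ grow under generic multiplication; it says nothing about the zero locus of $F_t|_\ell$, so it cannot supply the missing step.

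The ingredient you are missing is the $G$-invariance of $\yY$. The paper chooses coordinates so that $p=\{X_1=\cdots=X_m=Y_1=\cdots=Y_n=0\}$ and $\ell=\{X_2=\cdots=X_m=Y_1=\cdots=Y_n=0\}$, and observes that, because the $GL_{m+1}\times GL_{n+1}$-orbit of $(p,t)$ lies in $\yY$, the vertical tangent space $T_{\yY/\pmn}|_{(p,t)}$ contains $F$ and all $X_1\,\partial F/\partial X_i$ for $0\leq i\leq m$. Restricting to $\ell$, the subspace $K:=\langle F|_\ell,\,X_1\partial F/\partial X_0|_\ell,\,X_1\partial F/\partial X_1|_\ell\rangle$ depends only on $F|_\ell$, which is unchanged as $F$ varies over $(p,t)+H^0(\eE\otimes\II_\ell)$; by contrast, the $m-1$ elements $X_1\partial F/\partial X_j|_\ell$ for $2\leq j\leq m$ can be made generic by Lemma~\ref{lem-2.11}, hence independent modulo $K$. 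Since all of these lie in $\Image\beta$ and $\dim\Image\beta=m$, one gets $\dim K\leq 1$, and the proportionality of $F|_\ell$ with $X_1\partial F/\partial X_0|_\ell$ forces $F|_\ell$ to be a pure power of $X_1$. This explicit use of the group action is the ``numerology'' you were reaching for; without it the bound $\dim\Image\rho\leq m$ alone places no constraint on where the zeros of $F_t|_\ell$ sit.
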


\begin{proof}
	Let $\ell$ denote the $\PP^m$-line associated to $(p,t)$ obtained from Lemma \ref{lem-2.10}. We can assume without loss of generality that $$p=\{X_1=\ldots=X_m=Y_1=\ldots=Y_n=0\},$$ and  $$\ell=\{X_2=\ldots=X_m=Y_1=\ldots=Y_n=0\}.$$
	Consider the restriction map 
	$$ \beta: T_{\yY/\pmn}\vert_{(p,t)}\rightarrow H^0(\OO_{\ell}(a)(-p)),$$ whose kernel is $H^0(\eE\otimes \II_\ell)$. Denote by $F$ the polynomial corresponding to $t$.\\
	
	Since $\yY$ is $G$-invariant, $T_{\yY/\pmn}\vert_{(p,t)}$ contains the elements of $T_{(\pmn)\times V}\vert_{(p,t)}$ that are tangent to the $G$-orbit of $(p,t)$ and project to 0 in $T_{\pmn}\vert_p$. In particular, $T_{\yY/\pmn}\vert_{(p,t)}$ contains the elements
	$$ F,\; X_1\dfrac{\partial F}{\partial X_0},\;\ldots\;,X_1\dfrac{\partial F}{\partial X_m}.$$
	
	By Lemma \ref{lem-2.11}, we may assume that $F$ is generic in the space $(p,t)+H^0(\eE \otimes \II_\ell),$ so we can assume that the $m-1$ elements
	$$X_1 \frac{\partial F}{\partial X_2}\vert_\ell,\ldots,X_1 \frac{\partial F}{\partial X_m}\vert_\ell$$ in Im $\beta$ are independent modulo the subspace $$K:=\langle\; F\vert_\ell, X_1 \frac{\partial F}{\partial X_0}\vert_\ell, X_1 \frac{\partial F}{\partial X_1}\vert_\ell \;\rangle \subseteq  H^0(\OO_\ell(a)(-p)).$$
	By a dimension count, we have 
	\begin{equation*}
    	\begin{split}
    	    \text{dim Im }\beta &= \dim \left(T_{\yY/\pmn}\vert_{(p,t)}\right)- \dim H^0(\eE\otimes \II_\ell)\\
    	    &=\dim \left(T_{\xX/\pmn}\vert_{(p,t)}\right) - \dim H^0(\eE\otimes\II_\ell)-\dim \left(h^*T_{\xX/\pmn}\vert_{(p,t)}\Big/T_{\yY/\pmn}\vert_{(p,t)}\right)\\
    	    &=a-(m+n-2)\\
    	    &=m
    	\end{split}
	\end{equation*} 
	when $a=2m+n-2.$ Hence, $\dim K \leq \dim \text{Im }\beta-(m-1)= m-(m-1)=1$, which implies that $F\vert_\ell= a\cdot X_1^aY_0^b.$ Therefore, the curve $h_t:Y_t\rightarrow X_t$ lies in the locus $\Lambda^{\PP^m}_a\vert_t\subseteq X_t.$
\end{proof}

The following is the analogous result when $b=m+2n-2$ and the curve is of type $(s_1,s_2)=(0,m+n-2)$ instead.

\begin{lemma}\label{lem-2.112}
    Suppose that $b=m+2n-2$, and that at a general point $(p,t) \in \yY$,
    \begin{enumerate}
        \item $M_{H_1}\rightarrow N_{h_t/X_t}$ has torsion image for a generic polynomial in $H^0(\eE\otimes H_1^\vee),$ and 
        \item $M_{H_2}\rightarrow N_{h_t/X_t}$ has rank-one image for a generic polynomial in $H^0(\eE\otimes H_2^\vee)$ and is not surjective.
    \end{enumerate}
    Then this curve lies in the locus $\Lambda^{\PP^n}_b\vert_t\subseteq X_t$.
\end{lemma}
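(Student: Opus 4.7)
The plan is to mirror the proof of Lemma~\ref{lem-osc} with the roles of $H_1$ and $H_2$ swapped, using Lemma~\ref{lem-2.102} in place of Lemma~\ref{lem-2.10}, while Lemma~\ref{lem-2.11} applies unchanged. First, by Lemma~\ref{lem-2.102}, hypotheses (1) and (2) produce a unique $\PP^n$-line $\ell$ through $p$ whose ideal $H^0(\eE\otimes \II_\ell)$ sits inside $T_{\yY/\pmn}\vert_{(p,t)}$. Choose coordinates so that
$$p=\{X_1=\ldots=X_m=Y_1=\ldots=Y_n=0\}, \quad \ell=\{X_1=\ldots=X_m=Y_2=\ldots=Y_n=0\},$$
and consider the restriction map
$$\beta: T_{\yY/\pmn}\vert_{(p,t)} \longrightarrow H^0(\OO_\ell(b)(-p)),$$
whose kernel is $H^0(\eE\otimes \II_\ell)$. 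Let $F$ be the polynomial corresponding to $t$.

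By $G$-invariance of $\yY$, the vertical tangent space $T_{\yY/\pmn}\vert_{(p,t)}$ contains the elements
$$F,\; Y_1\dfrac{\partial F}{\partial Y_0},\; Y_1\dfrac{\partial F}{\partial Y_1},\;\ldots,\; Y_1\dfrac{\partial F}{\partial Y_n},$$
arising from tangent directions of the $G$-orbit that project to $0$ in $T_{\pmn}\vert_p$. By Lemma~\ref{lem-2.11} applied to $T=\ell$, we may assume $F$ is generic in the affine space $(p,t)+H^0(\eE\otimes \II_\ell)$, so the $n-1$ elements
$$Y_1\dfrac{\partial F}{\partial Y_2}\Big\vert_\ell,\;\ldots,\; Y_1\dfrac{\partial F}{\partial Y_n}\Big\vert_\ell$$
in $\mathrm{Im}\,\beta$ are linearly independent modulo
$$K := \Bigl\langle F\vert_\ell,\; Y_1\dfrac{\partial F}{\partial Y_0}\Big\vert_\ell,\; Y_1\dfrac{\partial F}{\partial Y_1}\Big\vert_\ell \Bigr\rangle \subseteq H^0(\OO_\ell(b)(-p)).$$

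A dimension count analogous to the one in the proof of Lemma~\ref{lem-osc} gives
$$\dim \mathrm{Im}\,\beta = \dim H^0(\eE\vert_\ell(-p)) - \mathrm{rank}\,N_{h_t/X_t} = b - (m+n-2) = n$$
when $b=m+2n-2$, where we use $\eE\vert_\ell \cong \OO_\ell(b)$ on a $\PP^n$-line. Therefore $\dim K \leq n-(n-1)=1$, which forces $F\vert_\ell$ to be a scalar multiple of $X_0^a Y_1^b$. This says exactly that $\ell$ meets $X_t$ only at $p$, so $p\in \Lambda^{\PP^n}_b\vert_t$. Since $(p,t)$ was a general point of $\yY$, the entire curve $h_t:Y_t\rightarrow X_t$ lies in $\Lambda^{\PP^n}_b\vert_t$.

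The only step that requires any genuine thought, rather than symmetric bookkeeping, is the dimension count: one must verify that $\eE\vert_\ell$ restricts to $\OO_\ell(b)$ on a $\PP^n$-line (the $H_1$-factor contributes trivially on such a line), so that $\dim H^0(\OO_\ell(b)(-p))=b$ and the equality $\dim\mathrm{Im}\,\beta=n$ holds precisely when $b=m+2n-2$. Once this is checked, the argument closes in one line.
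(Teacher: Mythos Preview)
Your proof is correct and is exactly the symmetric analogue of the proof of Lemma~\ref{lem-osc} that the paper intends: the paper states Lemma~\ref{lem-2.112} as ``the analogous result'' without writing out the details, and your argument supplies precisely those details with the roles of $H_1,H_2$ and $a,b$ interchanged. The dimension count and the conclusion $F\vert_\ell \in \mathbb{C}\cdot X_0^a Y_1^b$ are handled correctly.
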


\begin{remark}\label{rmk-orbit}
    When $n=1$, Lemma \ref{lem-2.112} follows directly from Lemmas 2.10 and 2.11 in \cite{CoskunRiedlsurfaces} by the argument used there toward classifying very general surfaces in $\PP^2\times\PP^1$. (See \cite{CoskunRiedlsurfaces}, Proof of Theorem 3.4, Case 4.) This argument goes as follows. By Lemmas \ref{lem-2.102} and \ref{lem-2.11} above, $W$ is a union of fibers of the restriction map $\beta:Z\rightarrow H^0(\eE\vert_\ell)$ where $\ell$ is the $\PP^n$-line $\pi_1^{-1}\pi_1(p)$, so $X_t\cap \ell$ is a subscheme $B$ of $b$ (not necessarily distinct) points on $\PP^1$. If $B$ contained $\geq 2$ distinct points, then its $GL(2)$-orbit would have codimension $\leq b-2$, which equals $m-2$ when $b=m+2n-2=m$. Since $\yY$ is $G$-invariant, $W$ contains the fibers of $\beta$ over $B$, so we would have 
    $\codim(\yY\subseteq \xX)\leq m-2.$ However, this is a contradiction since $\codim(\yY\subseteq \xX)= m+n-2=m-1$ by (\ref{eqn-codimyx}). Hence, $B$ has to be one unique point with multiplicity $b$. 
\end{remark}

For the rest of this section, we investigate the geometry of the osculation loci $\Lambda^{\PP^m}_a\vert_t$ and $\Lambda^{\PP^n}_b\vert_t$. Let us define the incidence variety
$$\Delta^{\PP^m}_a =\{ (L,p,t)\mid \pi_1^{-1}(L)\cap \pi_2^{-1}(\pi_2(p)) \cap X_t = a\cdot [p] \} \subseteq \GG(1,m)\times(\pmn)\times V,$$ and denote its fiber over $t$ by $\Delta^{\PP^m}_a\vert_t.$

\begin{lemma}[cf Theorem 5.2 in \cite{Clusfam}]\label{lem-gentype}
	For a very general $t\in U$, $\Delta^{\PP^m}_a\vert_t$ is a smooth, irreducible variety of dimension $2m-1+n-a$. Moreover, it is of general type when $a\geq \max\{\sqrt{2m}+1,3\}$ and $b\geq n+2$.
\end{lemma}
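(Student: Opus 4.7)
The plan is to realize $\Delta^{\PP^m}_a|_t$ as the smooth zero locus of a section of a relative jet bundle on a projective bundle, and then apply adjunction together with a positivity argument. Let $S$ be the tautological rank-$2$ subbundle on $\GG := \GG(1,m)$, and consider the $\PP^1$-bundle $\pi: I := \PP(S) \times \PP^n \to \GG \times \PP^n$ whose fiber over $(L, p_2)$ is the $\PP^m$-line $\ell_{L, p_2}$, together with the natural map $\rho: I \to \pmn$. The relative $(a-1)$-jet bundle $J := J^{a-1}_\pi(\rho^*\eE)$ is a vector bundle of rank $a$ on $I$. The polynomial $F$ corresponding to $t \in U$ induces a section $\sigma_F \in H^0(I, J)$ by evaluation, and its zero scheme $\tilde\Delta := Z(\sigma_F)$ maps isomorphically to $\Delta^{\PP^m}_a|_t$ via $(L, p_1, p_2) \mapsto (L, (p_1, p_2))$.

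Smoothness and the expected dimension follow from transversality. Since $\eE$ is very ample, the evaluation map $V \otimes \OO_I \twoheadrightarrow J$ is surjective, so by Bertini the section $\sigma_F$ meets the zero section of $J$ transversely for a very general $t$; this yields smoothness of $\tilde\Delta$ of codimension $a$ in $I$, and $\dim \Delta^{\PP^m}_a|_t = 2m-1+n-a$. For irreducibility, observe that the universal total space $\tilde\Delta^{\mathrm{univ}} \subseteq I \times V$ is the kernel of the above surjection, hence a vector subbundle of $I \times V$ and irreducible; a standard Zariski-connectedness argument on the map $\tilde\Delta^{\mathrm{univ}} \to V$ then yields irreducibility of the very general fiber.

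For the general type claim I compute the canonical class via adjunction: $K_{\tilde\Delta} = (K_I + c_1(J))|_{\tilde\Delta}$. Let $c_1 = c_1(S^\vee)$ (the ample Plücker class on $\GG$), $\xi = c_1(\OO_{\PP(S)}(1)) = \rho^*H_1$, and $H = \rho^*H_2$. From the relative Euler sequence of $\pi$ one has $\Omega_\pi = c_1 - 2\xi$, whence $K_I = -mc_1 - 2\xi - (n+1)H$; the jet-bundle filtration with successive quotients $\Omega_\pi^{\otimes i} \otimes \rho^*\eE$ for $0 \leq i \leq a-1$ gives $c_1(J) = \binom{a}{2}c_1 + a\xi + abH$. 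Thus
\begin{equation*}
    K_{\tilde\Delta} = \left[\left(\binom{a}{2} - m\right)c_1 + (a-2)\xi + (ab-n-1)H\right]\bigg|_{\tilde\Delta}.
\end{equation*}

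Under $a \geq \max\{\sqrt{2m}+1,\,3\}$ and $b \geq n+2$, all three coefficients are strictly positive: $\binom{a}{2} > m$ since $(\sqrt{2m}+1)\sqrt{2m} = 2m + \sqrt{2m} > 2m$; $a - 2 \geq 1$; and $ab - n - 1 \geq 3(n+2) - n - 1 > 0$. Since $c_1$ and $H$ are pulled back from ample classes on $\GG$ and $\PP^n$, while $\xi$ is $\pi$-relatively ample on $I$, such a positive combination is ample on $I$, so its restriction $K_{\tilde\Delta}$ is ample and hence big on $\tilde\Delta$, establishing general type. The main obstacle lies in the canonical-class computation itself, where careful attention to sign conventions in the Euler sequence of $\pi$ and in the jet-bundle filtration is essential; once the explicit formula above is verified, the conclusion follows immediately from the positivity of the three coefficients.
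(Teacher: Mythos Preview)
Your approach is essentially the paper's: both realize $\Delta^{\PP^m}_a\vert_t$ as a regular zero locus on the universal line $I=\PP(S)\times\PP^n$ and compute the canonical class by adjunction. Your relative jet bundle $J^{a-1}_\pi(\rho^*\eE)$ is precisely the quotient of $\pi^*\operatorname{Sym}^a S^\vee\otimes\OO(bH)$ by the bottom piece $R^a\otimes\OO(bH)$, so its filtration coincides with the paper's $F^\bullet$ (up to the $\OO(bH)$ twist); your $H$-coefficient $ab-n-1$ and the paper's $b-n-1$ differ by exactly this twist, but both are positive once $a\geq 3$ and $b\geq n+2$, so the conclusion is unaffected.

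The one step that needs repair is the ampleness justification. The principle ``$\pi$-relatively ample plus pullback of ample from the base is ample'' is false in general: on $\PP(\OO_{\PP^1}\oplus\OO_{\PP^1}(-d))\to\PP^1$ the class $\xi=c_1(\OO(1))$ is relatively ample and the fiber class $f$ is pulled back from an ample class, yet $\xi+\epsilon f$ fails to be nef for $0<\epsilon<d$. In your situation the conclusion is nonetheless correct, for a different reason: $I$ embeds as a closed subvariety of $\GG(1,m)\times\PP^m\times\PP^n$ via the flag incidence, and under this embedding $c_1,\xi,H$ are the restrictions of the ample generators $\sigma_1,H_1,H_2$ of the three factors. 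Any strictly positive combination is therefore the restriction of an ample class, hence ample on $I$ and on $\tilde\Delta$. With this correction your argument goes through.
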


\begin{proof}
    Since $\Delta^{\PP^m}_a$ is a vector bundle over the space of pairs $(L,p)\in \GG(1,m)\times(\pmn)$ such that $\pi_1(p)\in L$, it is a smooth and irreducible variety. 
	
	Now we prove that the canonical bundle $K_{\Delta^{\PP^m}_a}$ is very ample when $a,b$ are large enough. For convenience, let us denote $\GG:=\GG(1,m)\times (\PP^m\times \PP^n)\times V$, and let us denote the various projection maps by $\rho_1: \GG\rightarrow \GG(1,m)$, $\rho_2: \GG\rightarrow \PP^m\times \PP^n$, $\rho_{2,m}: \GG\rightarrow \PP^m$, $\rho_{2,n}: \GG\rightarrow \PP^n$, and $\rho_{3}: \GG\rightarrow V$. We will construct $\Delta^{\PP^m}_a$ as the vanishing locus of some vector bundle on $\GG$. Let $\sigma,H_1,H_2$ be the pullback of the hyperplane divisors via $\rho_1,\rho_{2,m},\rho_{2,n}$ respectively. Then the canonical bundle of $\GG$ is of class
	$$K_\GG=(-m-1)\sigma+(-m-1)H_1+(-n-1)H_2=:(-m-1,-m-1,-n-1).$$
	First consider the natural map $$e: \rho_3^*\mathcal{O}_V(-1)\rightarrow \rho_2^*\mathcal{E}$$ of vector bundles on $\GG$. Its zero scheme is the locus of points $(L,p,t)$ with $p\in X_t.$ Then consider the natural map 
	$$f:\rho_{2,m}^*\mathcal{O}_{\PP^m}(-1)\hookrightarrow\rho_1^*\mathcal{O}^{m+1}_{\mathbb{G}(1,m)}\twoheadrightarrow \rho_1^*Q,$$ where the surjection comes from the following tautological sequence on $\GG(1,m)$
	$$0\rightarrow S \rightarrow \mathcal{O}^{m+1}_{\mathbb{G}(1,m)}\rightarrow Q \rightarrow 0. $$
	The zero scheme of $f$ is the locus of points $(L,p,t)$ with $\pi_1(p)\in L.$ Since $\Delta^{\PP^m}_1$ is the common zero scheme of $e,f$, we can compute the class of its canonical bundle as follows:
	\begin{equation}\label{eqn-delta1}
	    \begin{split}
	        K_{\Delta^{\PP^m}_1} &= K_\GG + c_1(\rho_2^*\eE) + c_1(\rho_{1}^*Q\otimes \rho_{2,m}^*\OO_{\PP^m}(1))\\
	        &=K_\GG + c_1(\rho_2^*\eE) + c_1(\rho_{1}^*Q)+ (m-1)\cdot c_1(\rho_{2,m}^*\OO_{\PP^m}(1)))\\
	        &=(-m-1, -m-1, -n-1) + (0, a, b) + (1, 0, 0) + (0, m-1, 0)\\
	        &=(-m, a-2, b-n-1).
	    \end{split}
	\end{equation}
	
	Note that on $\Delta^{\PP^m}_1$, the natural map 
	$$\rho_{2,m}^*\OO_{\PP^m}(-1) \hookrightarrow \rho_1^*\OO^{m+1}_{\GG(1,m)}$$ from above factors as 
	\begin{center}
		\begin{tikzcd}
		&\rho_{2,m}^*\OO_{\PP^m}(-1) \arrow[rr, hookrightarrow] \arrow[dr, hookrightarrow, "\mu"]& &\rho_1^*\OO^{m+1}_{\GG(1,m)}\\
		& &\rho_1^*S  \arrow[ur, hookrightarrow] &
		\end{tikzcd}.
	\end{center}
	Let $R^\vee$ be the cokernel of $\mu$. We have the short exact sequence 
	\begin{equation*}
	    0 \rightarrow \rho_{2,m}^*\OO_{\PP^m}(-1) \xrightarrow{\mu} \rho_1^*S \rightarrow R^\vee \rightarrow 0,
	\end{equation*}
	on $\Delta^{\PP^m}_1$, which induces a filtration $F^\bullet$ on $\rho_1^*Sym^aS^\vee$ with
	$$ F^i/F^{i+1}= \rho_{2,m}^*\OO_{\PP^m}(a-i)\otimes R^i.$$ 
	Note that 
	\begin{equation}\label{eqn-filtration}
	    c_1(F^i/F^{i+1}) = (0, a-i, 0) + i\cdot (1, -1, 0) = (i,a-2i,0).
	\end{equation}
	
	Since $\Delta^{\PP^m}_a$ is the zero scheme of the (well-defined) natural map 
	$$\OO_V(-1) \rightarrow  F^1/F^a,$$
	we can compute its canonical bundle using adjunction again as follows:
	\begin{align*}
	K_{\Delta^{\PP^m}_a} &= K_{\Delta^{\PP^m}_1}+c_1(F^1/F^a)\\
	&= K_{\Delta^{\PP^m}_1}+\sum_{i=1}^{a-1}c_1(F^i/F^{i+1})\\
	&=(-m,a-2,b-n-1)+\sum_{i=1}^{a-1}(i,a-2i,0) \qquad \text{by (\ref{eqn-delta1}) and (\ref{eqn-filtration})}\\
	&=\left(-m+\frac{a(a-1)}{2},a-2,b-n-1\right).
	\end{align*}
	Hence $K_{\Delta^{\PP^m}_a}$ is very ample when $a\geq \max\{\sqrt{2m}+1,3\}$ and $b\geq n+2$.
	
	Finally we verify the dimension of $\Delta^{\PP^m}_a\vert_t.$ Since a very general fiber of the projection map $(L,p,t)\mapsto (L,p)$ has codimension $a$ in $V$, we have
	$$\dim \Delta^{\PP^m}_a= \dim \GG(1,m) + n + 1 + (\dim V - a) = 2m-1+n-a+\dim V,$$ 
	and so $\dim \Delta^{\PP^m}_a\vert_t=2m-1+n-a$.
\end{proof}

\begin{lemma}[cf Proposition 5.3 in \cite{Clusfam}]\label{lem-curve}
    For a very general $t\in U,$ the projection map $\Delta^{\PP^m}_a\vert_t\rightarrow \Lambda^{\PP^m}_a\vert_t$ is an isomorphism when $a\geq 2m+n-2.$
\end{lemma}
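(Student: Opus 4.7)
The plan is to show that $\pi:\Delta^{\PP^m}_a\vert_t\to\Lambda^{\PP^m}_a\vert_t$ is an isomorphism for very general $t\in U$ by verifying two properties: (a) $\pi$ is injective on closed points, and (b) $\pi$ is unramified, i.e.\ each scheme-theoretic fiber of $\pi$ is a reduced point. Since $\Delta^{\PP^m}_a\vert_t$ is smooth and irreducible by Lemma~\ref{lem-gentype} and $\pi$ is surjective by definition of $\Lambda^{\PP^m}_a\vert_t$, together (a) and (b) will imply that $\pi$ is a bijective closed immersion, hence an isomorphism onto its (smooth) image. Both (a) and (b) will reduce to dimension counts exhibiting a ``bad'' incidence variety over $V=H^0(\pmn,\eE)$ as a proper sub-locus, so that the bad phenomenon is avoided by very general $t$.

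For (a), I would consider
\[
I:=\{(L_1,L_2,p,t): L_1\neq L_2,\ \pi_1(p)\in L_1\cap L_2,\ (L_i,p,t)\in\Delta^{\PP^m}_a \text{ for } i=1,2\}.
\]
Triples $(L_1,L_2,p)$ with $L_1\neq L_2$ both through $\pi_1(p)$ form an irreducible base of dimension $(m+n)+2(m-1)=3m+n-2$. For each such triple, the conditions $F(p)=0$ together with $F|_{L_i\times\pi_2(p)}$ vanishing to order $a$ at $p$ for $i=1,2$ impose $1+2(a-1)=2a-1$ linear conditions on $F\in V$, which are independent because $L_1\neq L_2$. Hence $\dim I\leq (3m+n-2)+\dim V-(2a-1)<\dim V$ whenever $a\geq 2m+n-2$ and $m+n\geq 4$. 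So $I\to V$ is not dominant, and very general $t$ lies outside its image, giving (a).

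For (b), I would explicitly describe the scheme-theoretic fiber of $\pi$. Choosing coordinates with $\pi_1(p)=[1:0:\cdots:0]$ and parametrizing $\PP^m$-lines through $\pi_1(p)$ by $v\in\PP^{m-1}$ via $L_v=\{[s:tv]:[s:t]\in\PP^1\}$, write
\[
F|_{L_v\times\pi_2(p)}=\sum_{i=0}^{a}c_i(v)\,s^{a-i}t^i,
\]
where $c_i(v)$ is a form of degree $i$ in $v$. Since $c_0=F(p)=0$ automatically, the scheme-theoretic fiber of $\pi$ over $p$ is $V(c_1,\ldots,c_{a-1})\subseteq\PP^{m-1}$, and is a single reduced point at $v_0$ iff the differentials $dc_1(v_0),\ldots,dc_{a-1}(v_0)$ span the $(m-1)$-dimensional $T_{v_0}^*\PP^{m-1}$. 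I would then introduce
\[
J:=\{(L,p,t,[w]): (L,p,t)\in\Delta^{\PP^m}_a,\ 0\neq w\in T_{v_0}\PP^{m-1},\ dc_i(v_0)(w)=0 \text{ for } i=1,\ldots,a-1\}.
\]
The base $(L,p,[w])$ has dimension $(m+n)+(m-1)+(m-2)=3m+n-3$, and the $a$ conditions $c_0=\cdots=c_{a-1}=0$ together with the $a-1$ conditions $dc_i(v_0)(w)=0$ amount to $2a-1$ linear conditions on $F$, independent for $w\neq 0$. Thus $\dim J\leq \dim V+(3m+n-2-2a)<\dim V$ when $a\geq 2m+n-2$, and (b) follows.

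The main obstacle is verifying rigorously the independence of the $2a-1$ linear conditions on $F$ in each count. Concretely, writing $F=\sum_\alpha F_\alpha(Y)\,X^\alpha$, the functionals $c_i(v_0)$ and $dc_i(v_0)(w)$ only depend on the disjoint subsets of coefficients $\{F_{(a-i,\alpha')}(\pi_2(p)):|\alpha'|=i\}$ indexed by $i$, so the independence of the full collection reduces, for each fixed $i\geq 1$, to the standard observation that for $w\notin\langle v_0\rangle$ the functionals ``evaluate at $v_0$'' and ``take the directional derivative at $v_0$ in the direction $w$'' are linearly independent on forms of degree $i$ in $v$.
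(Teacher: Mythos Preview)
Your argument is correct and in fact more complete than the paper's. The paper's own proof carries out only your step (a): it introduces the incidence variety $I$ of pairs of distinct osculating $\PP^m$-lines through a common point and bounds its dimension so that $I$ fails to dominate $V$, giving set-theoretic injectivity of $\Delta^{\PP^m}_a\vert_t\to\Lambda^{\PP^m}_a\vert_t$ for very general $t$. (The paper also records that $X_t$ contains no $\PP^m$-lines when $a\ge 2m+n-2$; this guarantees that $\Delta^{\PP^m}_a\vert_t$ is closed and the projection proper, a fact you are using implicitly when you pass from ``bijective and unramified'' to ``closed immersion''.) The paper does not carry out your step (b); your dimension count on $J$ supplies precisely the unramifiedness needed to upgrade a bijection of a smooth source to an honest isomorphism, so your proof is a genuine sharpening of the written one.

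Two minor remarks. First, your count of $2a-1$ independent linear conditions in (a) is the correct one (the condition $F(p)=0$ is shared by the two lines); the paper writes the fiber dimension as $\dim V-2a$, a slight overcount. With the correct $2a-1$, the inequality $\dim I<\dim V$ at $a=2m+n-2$ indeed requires $m+n\ge 4$, as you note; this is harmless because the lemma is only invoked in the paper for $m+n\ge 4$. Second, to conclude ``closed immersion'' from (a) and (b) you should make properness explicit: this is exactly where the ``no $\PP^m$-lines'' observation enters, since it forces the open condition ``$\ell\cap X_t=a\cdot[p]$'' in the definition of $\Delta^{\PP^m}_a$ to be closed over a very general $t$, so that $\Delta^{\PP^m}_a\vert_t$ is projective and $\pi$ proper. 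Then proper $+$ injective on closed points $+$ unramified yields a closed immersion, and bijectivity onto the reduced locus $\Lambda^{\PP^m}_a\vert_t$ finishes the isomorphism.
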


\begin{proof}
    Let us define 
    $$\DD=\{(p,t)\mid p\in X_t \text{ and } \exists L_1\neq L_2\in \GG(1,m) \text{ s.t. }L_1\cap L_2=\pi_1(p) \} \subseteq \xX_V.$$ 
    From the proof of Lemma \ref{lem-negative}, $X_t$ does not contain $\PP^m$-lines when $a\geq 2m+n-2$, so it is enough to prove that codim $(\DD\subseteq \xX_V)\geq m+n.$
    
    Define the incidence variety 
    $$ I=\{(p, t,L_1,L_2)\mid p\in X_t, L_1\neq L_2 \text{ and } L_1\cap L_2=\pi_1(p) \}. $$
    The space of tuples $(p,L_1,L_2)$ such that $L_1\cap L_2=\pi_1(p)$ has dimension equal to $$2(m-1)+(1+n)+(m-1)=3m+n-2.$$ Since a fiber of $I$ over such a tuple has dimension $\dim V-2a$, we have $$\dim I = \dim V+3m+n-2-2a,$$ so $\DD$ being the image of $I$ must have dimension $\leq \dim V+3m+n-2-2a$. Therefore,
    \begin{equation*}
        \begin{split}
            \text{codim }(\DD\subseteq \xX_V) &\geq \dim \xX_V - (N+3m+n-2-2a )\\
            &=(\dim V+m+n-1) - (\dim V+3m+n-2-2a )\\
            &=2a-2m+1,
        \end{split}
    \end{equation*}
    which is $\geq m+n$ when $a\geq \frac{3m+n-1}{2}.$
\end{proof}

The following are the analogous results to Lemmas \ref{lem-gentype} and \ref{lem-curve} for $\Delta^{\PP^n}_b\vert_t$ and $\Lambda^{\PP^n}_b\vert_t.$

\begin{lemma}
	For a very general $t\in U$, $\Delta^{\PP^n}_b\vert_t$ is a smooth, irreducible variety of dimension $m-1+2n-b$. Moreover, it is of general type when $a\geq m+2$ and $b\geq \max\{\sqrt{2n}+1,3\}$.
\end{lemma}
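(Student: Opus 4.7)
The plan is to carry out the proof by mirroring Lemma \ref{lem-gentype} with the roles of $(\PP^m, a)$ and $(\PP^n, b)$ swapped throughout. I work on $\GG := \GG(1,n) \times (\pmn) \times V$ with projections $\rho_1, \rho_{2,m}, \rho_{2,n}, \rho_3$ defined analogously, and record divisor classes in the basis $(\sigma, H_1, H_2)$ where $\sigma$ is now pulled back from $\GG(1,n)$, so that $K_\GG = (-n-1, -m-1, -n-1)$.

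For smoothness, irreducibility, and dimension, the same vector bundle argument applies: $\Delta^{\PP^n}_b$ is a vector bundle over the smooth, irreducible locus of pairs $(L, p) \in \GG(1,n) \times (\pmn)$ with $\pi_2(p) \in L$, with a general fiber of codimension $b$ in $V$. Counting gives $\dim \Delta^{\PP^n}_b\vert_t = (2n-2) + (m+1) - b = m - 1 + 2n - b$.

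For the general type statement, I apply the same two-step adjunction. First, realize $\Delta^{\PP^n}_1$ as the common zero scheme of the natural maps $e : \rho_3^*\OO_V(-1) \to \rho_2^*\eE$ and $f : \rho_{2,n}^*\OO_{\PP^n}(-1) \to \rho_1^* Q$, where $Q$ is now the rank-$(n-1)$ tautological quotient on $\GG(1,n)$. This yields
\begin{equation*}
    K_{\Delta^{\PP^n}_1} = (-n-1, -m-1, -n-1) + (0, a, b) + (1, 0, n-1) = (-n,\; a - m - 1,\; b - 2).
\end{equation*}
Then filter $\rho_1^*\operatorname{Sym}^b S^\vee$ as in the original proof; the graded pieces now have $c_1(F^i/F^{i+1}) = (i, 0, b - 2i)$, and a second application of adjunction to $\OO_V(-1) \to F^1/F^b$ gives
\begin{equation*}
    K_{\Delta^{\PP^n}_b} = \left(-n + \frac{b(b-1)}{2},\; a - m - 1,\; b - 2\right).
\end{equation*}
This is very ample precisely when all three coordinates are at least $1$, i.e.\ $a \geq m+2$, $b \geq 3$, and $b(b-1) \geq 2n + 2$. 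The last condition is implied by $b \geq \sqrt{2n} + 1$ (together with the max against $3$ that handles small $n$), matching the stated hypotheses.

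The proof is a symmetric swap of Lemma \ref{lem-gentype}, so there is no conceptually new obstacle. The one place requiring care is bookkeeping the Chern-class contributions once the Plücker factor moves from $\GG(1,m)$ to $\GG(1,n)$: in particular, the tautological quotient's rank changes from $m-1$ to $n-1$, which shifts the corresponding $(n-1)$ onto the $H_2$ coordinate rather than $H_1$, and $R$ now has class $\sigma - H_2$ instead of $\sigma - H_1$.
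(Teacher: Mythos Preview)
Your proposal is correct and is precisely the argument the paper intends: the paper states this lemma as the analogue of Lemma~\ref{lem-gentype} with the roles of $(\PP^m,a)$ and $(\PP^n,b)$ swapped and gives no separate proof, so your explicit symmetric computation (including the correct bookkeeping that the tautological quotient on $\GG(1,n)$ has rank $n-1$ and that $c_1(R)=\sigma-H_2$) is exactly what is being left to the reader. One minor quibble: ``very ample precisely when all three coordinates are at least $1$'' overstates things---you only need the forward direction, that nonnegative integer combinations with each coefficient $\geq 1$ pull back to a very ample bundle on $\Delta^{\PP^n}_b\vert_t$, which is all the paper uses.
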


\begin{lemma}
    For a very general $t\in U,$ the projection map $\Delta^{\PP^n}_b\vert_t\rightarrow \Lambda^{\PP^n}_b\vert_t$ is an isomorphism when $b\geq m+2n-2.$
\end{lemma}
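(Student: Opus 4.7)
The plan is to mirror the proof of Lemma \ref{lem-curve} with the roles of $H_1$ and $H_2$ (equivalently, the two projective factors) interchanged. The projection $\Delta^{\PP^n}_b\vert_t \to \Lambda^{\PP^n}_b\vert_t$ fails to be an isomorphism at a point $p \in \Lambda^{\PP^n}_b\vert_t$ precisely when either (i) $X_t$ contains an osculating $\PP^n$-line through $p$, or (ii) there exist two distinct $\PP^n$-lines $L_1 \neq L_2$ in $\GG(1,n)$ with $L_1 \cap L_2 = \pi_2(p)$, each contributing an osculating $\PP^n$-line of $X_t$ through $p$. Case (i) is ruled out for very general $t$ by the analogue of Lemma \ref{lem-negative}, since $b \geq m+2n-2$ is precisely the bound ensuring that very general hypersurfaces of bidegree $(a,b)$ contain no $\PP^n$-lines. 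Thus it suffices to show that the locus $\DD' \subseteq \xX_V$ parametrizing pairs $(p,t)$ as in case (ii) has codimension at least $m+n$.

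To estimate $\codim(\DD' \subseteq \xX_V)$, I would introduce the incidence variety
\[
I' = \{(p,t,L_1,L_2) \mid p \in X_t,\; L_1 \neq L_2 \in \GG(1,n),\; L_1 \cap L_2 = \pi_2(p)\}.
\]
The space of tuples $(p, L_1, L_2)$ with $L_1 \cap L_2 = \pi_2(p)$ has dimension $2(n-1) + (1+m) + (n-1) = m + 3n - 2$, obtained by first choosing two distinct lines meeting at a point of $\PP^n$ and then choosing the $\PP^m$ coordinate of $p$. The fiber of $I'$ over such a tuple is the linear subspace of $V$ consisting of forms vanishing on the two distinct $\PP^n$-lines $\pi_1^{-1}(\pi_1(p)) \cap \pi_2^{-1}(L_i)$, hence has codimension $2b$ in $V$.

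Combining, $\dim I' = \dim V + m + 3n - 2 - 2b$, and since $\DD'$ is the image of $I'$ in $\xX_V$ under the evident projection, we obtain the bound
\[
\codim(\DD' \subseteq \xX_V) \geq \dim \xX_V - \dim I' = (m+n-1) - (m+3n-2-2b) = 2b - 2n + 1.
\]
The required inequality $2b - 2n + 1 \geq m+n$ rearranges to $b \geq (m+3n-1)/2$, and one checks that this is implied by $b \geq m+2n-2$ since $m \geq n \geq 1$ gives $m+2n-2 \geq (m+3n-1)/2$.

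I expect no serious obstacle here: the argument is a direct transposition, and the only item requiring care is verifying that the numerical inequality $(m+3n-1)/2 \leq m+2n-2$ holds within the hypotheses $m \geq n$ and that the excluded small cases (where this might fail or where the bound $b \geq m+2n-2$ is vacuous on $U$) are consistent with the running assumption $m+n \geq 5$ used in the main theorem. The appeal to the nonexistence of $\PP^n$-lines on $X_t$ for very general $t$ is where one must be careful to quote the correct analogue of Lemma \ref{lem-negative}, but this is a standard dimension-of-Fano-scheme computation parallel to the $\PP^m$-line case already handled.
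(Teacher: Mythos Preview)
Your proposal is correct and mirrors the paper's own argument exactly: the paper presents this lemma as the direct analogue of Lemma~\ref{lem-curve} without a separate proof, and your transposition (swapping $m\leftrightarrow n$, $a\leftrightarrow b$, $\pi_1\leftrightarrow\pi_2$) is precisely what is intended. One minor slip in wording: the fiber of $I'$ over a tuple $(p,L_1,L_2)$ should consist of forms $F$ for which both $\PP^n$-lines \emph{osculate} $V(F)$ to order $b$ at $p$, not forms vanishing identically on those lines; with that correction your codimension count $2b$ and the final inequality $b\geq(m+3n-1)/2$ match the paper's computation exactly.
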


One can use the analogues of the above tools in the $\PP^n$ setting to show that a very general hypersurface $X_d\subseteq \PP^n$ of degree $d\geq 2n-2$ is algebraically hyperbolic when $n\geq5$. Our proof is built upon the proofs of \cite{Pacienza2} and \cite{ClemensRan} for the $n\geq 6$ case. The key to our improved result is the use of Coskun and Riedl's scroll argument from \cite{CoskunRiedl}, which also gives an alternate proof of Voisin's result on the algebraic hyperbolicity of very general hypersurfaces of degree $d\geq 2n-1$ when $n\geq 4$ \cite{Voisin, Voisincorrection}.

\begin{theorem}\label{thm-pn}
   A very general hypersurface $X\subseteq \PP^n$ of degree $d$ is algebraically hyperbolic when \emph{(1)} $n\geq 4$ and $d\geq 2n-1$; or \emph{(2)} $n\geq 5$ and $d\geq 2n-2$.
\end{theorem}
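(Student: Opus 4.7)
My plan is to apply the three techniques of Section \ref{sec-tech} adapted to a single projective space $\PP^n$, where the only relevant section-dominating line bundle for $\OO_{\PP^n}(d)$ is the hyperplane class $H$, with associated Lazarsfeld-Mukai bundle $M_H$. For a curve $h_t: Y_t \to X_t$ in a very general hypersurface $X_t$, the $\PP^n$ analogues of Propositions \ref{prop-main} and \ref{prop-secdom} produce a surjection $\alpha: M_H^{\oplus s} \to N_{h_t/X_t}$ with no summand having torsion image, for some minimal $s \leq n-2$. I would split the analysis by $s$.

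For part (1), where $n \geq 4$ and $d \geq 2n-1$: if $s \leq n-3$, then the analogue of Lemma \ref{lem-bounds} gives $2g(Y_t) - 2 \geq (d-n-1-s)(Y_t\cdot H) \geq (Y_t\cdot H)$, which suffices. If $s = n-2$, then by Remark \ref{rem-ClemensRan} all rank-increments in the filtration equal $1$, so the final quotient of $M_H$ has rank one. Applying the $\PP^n$ analogue of Lemma \ref{lem-scroll} produces a surface scroll in $\PP^n$ containing $Y_t$; comparing intersection numbers with $X_t$ and using that $X_t$ contains no lines for $d > n$ yields the inequality
\[
2g(Y_t) - 2 \geq \left(d - (2n-1) + \tfrac{1}{d}\right)(Y_t \cdot H)
\]
of \cite{CoskunRiedl} (cf.\ Remark \ref{rmk-lines}), which is strictly positive.

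For part (2), where $n \geq 5$ and $d = 2n-2$: the cases $s \leq n-4$ again follow from the basic bound, now giving $\geq (Y_t \cdot H)$. For $s = n-3$, the non-decreasing increment condition of Remark \ref{rem-ClemensRan}, together with $n \geq 5$, forces the first rank-increment to equal $1$ (otherwise all increments are $\geq 2$, giving total rank $\geq 2(n-3) > n-2$); applying Lemma \ref{lem-scroll} to the corresponding rank-one quotient produces a $1/d$-type improvement of the basic bound, yielding $2g(Y_t) - 2 \geq \tfrac{1}{d}(Y_t\cdot H) > 0$. The crux is the maximal case $s = n-2$: I would adapt Lemma \ref{lem-osc} to $\PP^n$ (building on the techniques of \cite{Voisin, Voisincorrection, Pacienza2, ClemensRan}) to conclude that $h_t(Y_t)$ lies in the osculation locus $\Lambda_d|_t \subseteq X_t$, defined as the image of the incidence variety $\Delta_d = \{(L,p,t) : L\cap X_t = d\cdot[p]\} \subseteq \GG(1,n) \times \PP^n \times V$.

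The final step is to show $\Lambda_d|_t$ is a curve of general type. A dimension count gives $\dim \Delta_d|_t = 2n-1-d = 1$, and the analogue of Lemma \ref{lem-curve} shows the projection $\Delta_d|_t \to \Lambda_d|_t$ is birational for $d \geq 2n-2$. Realizing $\Delta_d$ as a successive zero locus on $\GG(1,n) \times \PP^n \times V$ --- cutting first $p \in L$ and $p \in X_t$, then promoting to order-$d$ vanishing via a filtration on $\mathrm{Sym}^d S^\vee$ as in the proof of Lemma \ref{lem-gentype} --- successive adjunction yields
\[
K_{\Delta_d|_t} = \left(-n + \tfrac{d(d-1)}{2}\right)\sigma + (d-2)H,
\]
which is very ample once $d \geq 3$ and $d(d-1)/2 \geq n+1$. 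Both conditions hold for $n \geq 5$, $d = 2n-2 \geq 8$. Hence $\Lambda_d|_t$ is a curve of general type, and Riemann-Hurwitz applied to the finite morphism $Y_t \to \Lambda_d|_t$ gives $(2g(Y_t)-2)/(Y_t\cdot H) \geq (2g(\Lambda_d|_t)-2)/(\Lambda_d|_t\cdot H) > 0$, furnishing the desired uniform $\varepsilon$. The main obstacle is the canonical bundle computation above and verifying its ampleness in the borderline case $n=5$, $d=8$; the inequality $d(d-1)/2 \geq n+1$ is precisely what makes the hypothesis $n \geq 5$ sharp in this argument, with the $n=4$ sextic threefold case remaining out of reach.
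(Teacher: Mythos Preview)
Your proposal is correct and follows essentially the same approach as the paper: case split on the minimal $s$, applying the basic degree bound for small $s$, the scroll method for the intermediate value of $s$, and the osculation-locus argument for $s=n-2$ when $d=2n-2$. The only differences are expository---you carry out the canonical bundle computation for $\Delta_d\vert_t$ explicitly where the paper simply cites \S4.1 of \cite{Pacienza}, and in the $s=n-3$ case you extract the rank-one quotient from the first summand rather than the last, which is an immaterial reordering since the inducing polynomials are generic.
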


\begin{proof}
    Suppose $X$ is parametrized by $t\in U$ and let $h_t:Y_t\rightarrow X_t$ be a curve in $X.$ Then, there is a surjection 
    $$ \alpha: M_H\;^{\oplus s}\rightarrow N_{h_t/X_t}$$ 
    for some integer $s$. Take $s$ to be the minimum. Since rank $N_{h_t/X_t}=n-2$, we have $s\leq n-2.$ 
    
    Suppose that $n\geq 4$. If $s\leq n-3,$ then 
    $$\deg N_{h_t/X_t}\geq (n-3)\cdot \deg \alpha\left(M_{H}\right)\geq (n-3)\cdot -\deg H\vert_{Y_t}.$$ Therefore, 
	\begin{equation*}
	    \begin{split}
	        2g(Y_t)-2 &= (K_{X_t}\cdot Y_t) + \deg N_{h_t/X_t}\\
	        &\geq ((d-n-1)-(n-3))\cdot (Y_t\cdot H) \\
	        &=(d-(2n-2))\cdot (Y_t\cdot H) 
	    \end{split}
	\end{equation*}
	which is at least $(Y_t\cdot H)$ when $d\geq 2n-1$. If $s=n-2,$ then the map $$M_H\rightarrow N_{h_t/X_t}/\alpha(M_H\;^{\oplus n-3}) $$ given by multiplication by a generic polynomial in $H^0(\PP^n,\OO_{\PP^n}(d-1))$ has a rank-one image $Q$, which induces a surface scroll $\Sigma\subseteq \PP^n$ of degree $\deg Q + (Y_t\cdot H)$. Since the curve $Y_t$ is contained in the curve $\Sigma\cap X_t$ whose degree is $d\cdot (\deg Q + (Y_t\cdot H)),$ we must have $$\deg Q\geq \left(\frac{1}{d}-1 \right)\cdot (Y_t\cdot H).$$
    Therefore, 
	\begin{equation*}
	    \begin{split}
	        2g(Y_t)-2 &= (K_{X_t}\cdot Y_t) + \deg N_{h_t/X_t}\\
	        &\geq ((d-n-1)-(n-3))\cdot (Y_t\cdot H) +\deg Q\\
	        &\geq(d+\frac{1}{d}-(2n-1))\cdot (Y_t\cdot H) 
	    \end{split}
	\end{equation*}
	which is at least $\frac{1}{d}(Y_t\cdot H)$ when $d\geq 2n-1$. This concludes the proof for $n\geq 4$ and $d\geq 2n-1$.
	
	Now suppose that $n\geq5$ and $d= 2n-2.$ If $s\leq n-4$, then as in the case $s\leq n-3$ from above, we have
    $\deg N_{h_t/X_t}\geq (n-4)\cdot -\deg H\vert_{Y_t}$. Therefore, $$2g(Y_t)-2 \geq ((d-n-1)-(n-4))\cdot (Y_t\cdot H) =(d-(2n-3))\cdot (Y_t\cdot H)=(Y_t\cdot H).$$ If $s=n-3$, then as in the case $s= n-2$ from above, the map 
    $$ M_H\rightarrow N_{h_t/X_t}/\alpha(M_H\;^{\oplus n-4}) $$ has a rank-one image $Q$, since we assume that $n\geq 5$. Then it follows from the scroll method as applied above that $2g(Y_t)-2\geq \frac{1}{d}(Y_t\cdot H)$. If $s=n-2$, then the map 
    $$ M_H\rightarrow N_{h_t/X_t}$$ has rank-one image and is not surjective. By the analogues of Lemmas \ref{lem-2.10}, \ref{lem-2.11} and \ref{lem-osc} for $\PP^n$, the curve lies in the 1-osculation locus of $X_t$, which is a curve of general type by \S4.1 in \cite{Pacienza}.
\end{proof}

\begin{remark}\label{rem-sextic}
    Consider a curve in a very general sextic threefold, i.e. when $n=4$ and $d=2n-2$. If the curve is of type $s=2$, then it satisfies the inequality (\ref{eqn-definition}) by the same argument as the case $n\geq 5$, $d=2n-2$ and $s=n-2$ in Theorem \ref{thm-pn}. If the curve is of type $s=1$, i.e. the map 
     $M_H\rightarrow N_{h_t/X_t}$ induced by a generic degree 5 polynomial has a rank-two image, then the scroll method as above does not apply (see also Remark \ref{rmk-lines}).
\end{remark}

\section{Proof of Theorem \ref{thm-pmxpn}}\label{sec-main}

\subsection{Proof of Theorem \ref{thm-pmxpn}(2).}

We verify that very general hypersurfaces of low bidegrees $(a,b)$ contain lines or elliptic curves. Hence, they are not algebraically hyperbolic. 

\begin{lemma}\label{lem-negative}
	Suppose that $a\leq 2m+n-3$ or $b\leq m+2n-3$. Then a very general hypersurface $X\subseteq \PP^m\times \PP^n$ of bidegree $(a,b)$ contains a line. In particular, it is not algebraically hyperbolic.
\end{lemma}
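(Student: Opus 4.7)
I will prove the case $a \leq 2m+n-3$; the case $b \leq m+2n-3$ is entirely symmetric after interchanging the two factors. In either case, $X$ will contain a line, which is a rational curve (violating the algebraic hyperbolicity inequality (\ref{eqn-definition}) since $2g-2=-2<0$).

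The plan is to set up the incidence variety of (line, hypersurface) pairs. Let $G_1 := \GG(1,m) \times \PP^n$ parameterize $\PP^m$-lines in $\pmn$, so $\dim G_1 = 2m+n-2$. For any such line $L$, K\"unneth gives $\eE|_L \simeq \OO_{\PP^1}(a)$, and the restriction $V \to H^0(\OO_{\PP^1}(a))$ is surjective, so
$$I_1 := \{(L,F) \in G_1 \times V : L \subseteq (F=0)\}$$
is a vector subbundle of $G_1 \times V$ over $G_1$ of rank $\dim V - (a+1)$. Hence $I_1$ is irreducible of dimension $\dim V + (2m+n-3-a) \geq \dim V$. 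The projection $\pi: I_1 \to V$ is proper (since $G_1$ is projective), so its image is closed in the irreducible $V$, and it suffices to show $\pi$ is dominant by exhibiting one point at which $d\pi$ is surjective.

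Fix $L_0 := \{X_2 = \cdots = X_m = 0\} \cap \pi_2^{-1}([1:0:\cdots:0])$, let $V_{L_0} := \ker(V \to H^0(\OO_{L_0}(a)))$, and let $F_0$ be a general element of $V_{L_0}$, defining a hypersurface $X_0 \supseteq L_0$. Viewing $I_1$ as the zero locus on $G_1 \times V$ of the section $(L,F) \mapsto F|_L$ of the pullback of the rank-$(a+1)$ bundle on $G_1$ with fiber $H^0(\OO_L(a))$ at $L$, the standard tangent space computation identifies the surjectivity of $d\pi$ at $(L_0, F_0)$ with the surjectivity of the map $H^0(N_{L_0/\pmn}) \to H^0(\OO_{L_0}(a))$ induced by the normal derivative $dF_0$. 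From the Euler sequence, $N_{L_0/\pmn} \simeq \OO_{\PP^1}(1)^{m-1} \oplus \OO_{\PP^1}^n$, so $H^1(N_{L_0/\pmn}) = 0$, and the long exact sequence for
$$0 \to N_{L_0/X_0} \to N_{L_0/\pmn} \to \OO_{L_0}(a) \to 0$$
reduces the desired surjectivity to the vanishing $H^1(L_0, N_{L_0/X_0}) = 0$.

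The key step is to verify this vanishing for generic $F_0 \in V_{L_0}$. Writing $F_0 = \sum_{i=2}^m X_i P_i + \sum_{j=1}^n Y_j Q_j$ with $P_i$ of bidegree $(a-1,b)$ and $Q_j$ of bidegree $(a,b-1)$, the bundle map $dF_0|_{L_0}: N_{L_0/\pmn} \to \OO_{L_0}(a)$ is encoded by the restrictions $P_i|_{L_0} \in H^0(\OO_{\PP^1}(a-1))$ and $Q_j|_{L_0} \in H^0(\OO_{\PP^1}(a))$, and any prescribed such tuple can be realized by lifting with pure monomials in $Y_0$ (take $P_i := \tilde P_i \cdot Y_0^b$ and $Q_j := \tilde Q_j \cdot Y_0^{b-1}$). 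Hence for generic $F_0$, the kernel $N_{L_0/X_0}$ has the generic (balanced) splitting type for a rank-$(m+n-2)$ bundle on $\PP^1$ of degree $m-1-a$: every summand has degree in $\{\lfloor \mu \rfloor, \lceil \mu \rceil\}$ with $\mu := (m-1-a)/(m+n-2)$. The hypothesis $a \leq 2m+n-3$ is precisely $\mu \geq -1$, so every summand has degree $\geq -1$ and $H^1 = 0$. Thus $d\pi$ is surjective at $(L_0, F_0)$, so $\pi(I_1)$ contains an open neighborhood of $F_0$; being closed in irreducible $V$, it must equal $V$, so in fact every hypersurface of bidegree $(a,b)$ contains a $\PP^m$-line. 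The main technical point is the appeal to generic splitting on $\PP^1$, where the numerical bound $a \leq 2m+n-3$ enters in an optimal way.
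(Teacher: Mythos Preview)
Your argument is correct in substance and actually proves more than the paper does: you conclude that \emph{every} bidegree-$(a,b)$ hypersurface contains a $\PP^m$-line, whereas the paper only claims this for very general ones. But the route is quite different. The paper's proof is a one-line codimension count: viewing $X$ via $\pi_2$ as an $n$-parameter family of degree-$a$ hypersurfaces in $\PP^m$, it observes that the locus of such hypersurfaces containing a line has codimension $(a+1)-2(m-1)$ in $\PP H^0(\OO_{\PP^m}(a))$, so an $n$-dimensional family meets it once $n\geq (a+1)-2(m-1)$, i.e.\ $a\leq 2m+n-3$. Your approach instead builds the incidence correspondence $I_1\to V$, verifies dominance by a first-order deformation computation, and reduces everything to the vanishing $H^1(N_{L_0/X_0})=0$ on the chosen line. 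This is considerably more work, but it is self-contained and yields the sharper surjectivity statement; the paper's argument is quicker but relies on the cited codimension fact and an implicit genericity step.

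One point deserves tightening. Your assertion that the generic kernel of a surjection $\OO(1)^{m-1}\oplus\OO^n\twoheadrightarrow\OO(a)$ is \emph{balanced} is stronger than you need and is not automatic for maps out of a non-balanced source (for instance, when $a=0$ the kernel is $\OO(1)^{m-1}\oplus\OO^{n-1}$). What your argument actually uses is only that every summand of $N_{L_0/X_0}$ has degree $\geq -1$, i.e.\ $H^1(N_{L_0/X_0})=0$. This follows directly without invoking balancedness: from the long exact sequence it is equivalent to surjectivity of $H^0(N_{L_0/\pmn})\to H^0(\OO_{L_0}(a))$, and since you have already shown that the tuple $(P_i|_{L_0},Q_j|_{L_0})$ can be prescribed arbitrarily, one may simply choose it so that this linear map from a $(2m+n-2)$-dimensional space onto an $(a+1)$-dimensional space is onto, which is possible precisely when $a\leq 2m+n-3$. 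With the final step phrased this way, the proof is complete.
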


\begin{proof}
	View $X$ as a very general $\PP^n$-family of degree $a$ hypersurfaces in $\PP^m$ via the projection map $\pi_2:X\rightarrow \PP^n$. The space of degree $a$ hypersurfaces in $\PP^m$ containing a line has codimension $(a+1)-2(m-1)$ in $H^0(\PP^m,\OO_{\PP^m}(a))$, so it has non-trivial intersection with $X$ when $a\leq 2m+n-3$. Therefore $X$ contains a $\PP^m$-line when $a\leq 2m+n-3$. The same argument shows that $X$ contains a $\PP^n$-line when $b\leq m+2n-3$.
\end{proof}

\begin{lemma}\label{lem-p2xp2}
	Let $(m,n)=(2,2)$. Suppose that $a=4$ or $b=4$. Then a very general hypersurface $X\subseteq \PP^2\times \PP^2$ of bidegree $(a,b)$ contains an elliptic curve. In particular, it is not algebraically hyperbolic.
\end{lemma}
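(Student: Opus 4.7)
By the involution of $\PP^2\times\PP^2$ that swaps the two factors, it suffices to prove the statement when $a=4$. The plan is to exhibit an elliptic curve in $X$ as a 2-nodal plane quartic fiber of the second projection $\pi_2|_X: X\to\PP^2$.

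For $q\in\PP^2$, the fiber $X_q := X\cap(\PP^2\times\{q\})$ is a plane quartic in $\pi_2^{-1}(q)\cong\PP^2$. If $F\in H^0(\OO(4,b))$ cuts out $X$, these fibers assemble into a morphism
$$\phi_X : \PP^2 \longrightarrow \PP^{14} = |\OO_{\PP^2}(4)|, \qquad q \longmapsto [F(\cdot,q)],$$
whose coordinates are sections of $\OO_{\PP^2}(b)$. For very general $F$, this morphism has no base points and its image $\phi_X(\PP^2)$ is a $2$-dimensional subvariety of degree $b^2$ in $\PP^{14}$. Let $V\subset\PP^{14}$ denote the Severi variety of plane quartics with at least $2$ nodes; by classical results it is irreducible of codimension $2$, and its general member is an irreducible plane quartic with exactly two nodes (and no other singularities), whose normalization is smooth of geometric genus $\binom{3}{2}-2 = 1$.

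First I would show that $\phi_X(\PP^2)\cap V \neq \emptyset$ in $\PP^{14}$ for very general $F$. Since $\phi_X(\PP^2)$ and $V$ have complementary dimensions, their intersection number is computed by the class
$$[\phi_X(\PP^2)]\cdot [V] = b^2\cdot \deg V \in A^{14}(\PP^{14}) = \ZZ,$$
which is strictly positive because $V$ is a non-empty irreducible subvariety of $\PP^{14}$ of codimension $2$. Second, for very general $F$ I would argue that these intersection points all lie in the open dense locus $V^{\circ}\subset V$ consisting of irreducible $2$-nodal plane quartics. The complement $V\setminus V^{\circ}$ (which contains reducible quartics, $3$-nodal quartics, and quartics with non-nodal singularities) is a proper closed subvariety of strictly smaller dimension, so its pullback under $\phi_X$ is a proper closed condition on $F$ that very general $F$ avoids.

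For any $q_0\in\phi_X^{-1}(V^{\circ})$, the fiber $X_{q_0}$ is an irreducible plane quartic of geometric genus $1$, i.e., an elliptic curve contained in $X$, so $X$ is not algebraically hyperbolic. The main obstacle I anticipate is justifying the non-emptiness of $\phi_X(\PP^2)\cap V$ rigorously, since it relies on some care with the Severi variety. An alternative route that sidesteps intersection theory on $\PP^{14}$ is to construct a specific pair $(F_0,q_0)$ realizing an irreducible 2-nodal fiber---for instance, take $F_0(x,y) = G(x)\,y_0^b + y_1 R_1(x,y) + y_2 R_2(x,y)$ for a fixed $2$-nodal plane quartic $G(x)$, $q_0 = [1:0:0]$, and generic $R_i$---and then run a tangent-space calculation at $(F_0,q_0)$ to conclude that the incidence variety $Z = \{(F,q)\in H^0(\OO(4,b))\times\PP^2 : F(\cdot,q)\in V\}$, which has the expected dimension $\dim H^0(\OO(4,b))$ via the codim-$2$ fiber computation over $\PP^2$, projects dominantly onto $H^0(\OO(4,b))$.
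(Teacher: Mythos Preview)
Your proposal is correct and follows essentially the same approach as the paper: project $X$ onto the second $\PP^2$ to get a two-dimensional family of plane quartics, then use a dimension count to see that this family must meet the codimension-$2$ locus of $2$-nodal quartics in $|\OO_{\PP^2}(4)|$. The paper's proof is much terser---it just states the codimension count $6-4=2$ and concludes---whereas you add the justification that for very general $F$ the resulting quartic can be taken irreducible (via the codimension-$\geq 3$ boundary of the Severi stratum), a point the paper glosses over; your alternative explicit construction is also a nice way to make the dominance of the incidence variety rigorous.
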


\begin{proof}
	As above, we can view $X$ as a very general net of degree 4 curves in $\PP^2$. By the degree-genus formula for plane curves, a general member of this net has geometric genus 3. We prove that there exists a curve in this net that has two nodes. Such a curve would be an elliptic curve since its geometric genus is $3-2=1$. Requiring a quartic plane curve to contain two particular distinct nodes in $\PP^2$ imposes $2\times 3 =6$ conditions on the equation of the curve. Since the two points can vary over a $(2\times 2 =4)$-dimensional family, the space of quartic plane curves containing any two distinct nodes has codimension $6-4=2$ in $H^0(\PP^2,\mathcal{O}_{\PP^2}(4))$. Hence, this family intersects the two-dimensional family that sweeps out the very general hypersurface $X$.
\end{proof}

\subsection{Proof of Theorem \ref{thm-pmxpn}(1)}

We divide up the proof by $(m,n)$. Then, for each possible type $(s_1,s_2),$ we produce a uniform $\varepsilon > 0$ such that all curves $h_t:Y_t\rightarrow X_t$ of type $(s_1,s_2)$ (see Definition \ref{def-s1s2}) lying in a very general hypersurface $X_t\subseteq \pmn$ satisfies
\begin{equation}\label{eqn-definition2}
    2g(Y_t)-2\geq \varepsilon \cdot ((Y_t\cdot H_1)+(Y_t\cdot H_2))
\end{equation}
provided that the bidegree $(a,b)$ of $X_t$ lies in the stated range. We assume without loss of generality that $s_1\geq s_2.$

\subsubsection{Proof of Theorem \ref{thm-pmxpn}(1)(a).}\label{sec-a}Let $m+n\geq5$.

\textbf{Case 1:} Suppose that $s_1,s_2\leq m+n-4$. Then, by (\ref{eqn-bounds2}), we have
\begin{equation*}
    \begin{split}
	        &2g(Y_t)-2\\ 
	        &\geq ((a-m-1)-(m+n-4))\cdot (Y_t\cdot H_1) + ((b-n-1)-(m+n-4))\cdot (Y_t\cdot H_2)\\
	        &=(a-(2m+n-3))\cdot (Y_t\cdot H_1) + (b-(m+2n-3))\cdot (Y_t\cdot H_2),
        \end{split}
\end{equation*}
which is at least $(Y_t\cdot H_1)+(Y_t\cdot H_2)$ when $a\geq 2m+n-2$ and $b\geq m+2n-2.$ 

\textbf{Case 2:} Suppose that $s_1= m+n-3$, so $s_2 \leq 1$. Consider the map
$$M_{H_1}\rightarrow N_{h_t/X_t}\Big/\alpha(M_{H_1}^{\oplus m+n-4})$$ 
induced by $\alpha$. Since $m+n\geq5$, this map must have a rank-one image $Q$, so we can apply the scroll considerations of Lemma \ref{lem-scroll}. Let $\Sigma\subseteq \pmn$ denote the $\PP^m$-scroll given by $Q$. The $H_1$-degree of $\Sigma$ is $\Sigma\cdot H_1^2$, which is equal to $\deg Q + (Y_t\cdot H_1)$ by Lemma \ref{lem-scroll}. Since $\Sigma$ is a $\PP^m$-scroll, we have $\Sigma\cdot H_2=(Y_t\cdot H_2)\cdot H_1^{m-1}H_2^n.$ Hence, $\Sigma$ is of numerical class
$$ (Y_t\cdot H_2) \cdot H_1^{m-1}H_2^{n-1}+ \left(\deg Q + (Y_t\cdot H_1)\right)\cdot H_1^{m-2}H_2^n.$$
$Y_t$ is contained in the curve $\Sigma \cap X_t$ whose class is $$(a\cdot  (H_2\cdot Y_t))\cdot H_1^mH_2^{n-1}+(b\cdot (H_2\cdot Y_t)+a\cdot (\deg Q + (H_1\cdot Y_t)))\cdot H_1^{m-1}H_2^n.$$ Since $\Sigma \cap X_t- Y_t$ is effective, we have $H_1\cdot (\Sigma \cap X_t-Y_t)\geq0$. Hence, we obtain the inequality $$b\cdot (H_2\cdot Y_t)+a\cdot (\deg Q + (H_1\cdot Y_t))-(H_1\cdot Y_t)\geq0,$$ which can be rearranged into $$\deg Q\geq \left(\frac{1}{a}-1\right)\cdot (H_1\cdot Y_t)+\left(-\frac{b}{a}\right)\cdot (H_2\cdot Y_t).$$
Therefore, by Lemma \ref{lem-bounds},
\begin{equation*}
\begin{split}
    &2g(Y_t)-2\\ 
    &\geq ((a-m-1)-(m+n-4))\cdot (Y_t\cdot H_1) + ((b-n-1)-1)\cdot (Y_t\cdot H_2)+ \deg Q\\
    &\geq \left(a+\frac{1}{a}-(2m+n-2)\right)\cdot (H_1\cdot Y_t)+\left(b-\frac{b}{a}-n-2\right)\cdot (H_2\cdot Y_t),
\end{split}
\end{equation*}
which is at least some multiple of $(Y_t\cdot H_1)+(Y_t\cdot H_2)$ when $a\geq 2m+n-2$ and $b>(n+2)\cdot \left(\dfrac{a}{a-1}\right)$. One can check that $b\geq m+2n-2>(n+2)\cdot \left(\dfrac{a}{a-1}\right)$ when $m+n\geq5$ and $m\geq n.$

\textbf{Case 3:} Suppose that $s_1=m+n-2$, so $s_2 =0.$ Then, the map 
$$M_{H_1}\rightarrow N_{h_t/X_t}$$ induced by $\alpha$ has a rank-one image $Q$, so we can apply the scroll considerations as in Case 2 to obtain $$\deg Q\geq \left(\frac{1}{a}-1\right)\cdot (H_1\cdot Y_t)+\left(-\frac{b}{a}\right)\cdot (H_2\cdot Y_t).$$ Therefore, by Lemma 3.3,
\begin{equation*}
\begin{split}
    &2g(Y_t)-2\\ 
    &\geq ((a-m-1)-(m+n-3))\cdot (Y_t\cdot H_1) + (b-n-1)\cdot (Y_t\cdot H_2)+ \deg Q\\
    &\geq \left(a+\frac{1}{a}-(2m+n-1)\right)\cdot (H_1\cdot Y_t)+\left(b-\frac{b}{a}-n-1\right)\cdot (H_2\cdot Y_t),
\end{split}
\end{equation*}
which is at least some multiple of $(Y_t\cdot H_1)+(Y_t\cdot H_2)$ when $a\geq 2m+n-1$ and $b>(n+1)\cdot \left(\dfrac{a}{a-1}\right)$. One can check that $b\geq m+2n-2>(n+1)\cdot \left(\dfrac{a}{a-1}\right)$ when $m+n\geq5$ and $m\geq n.$ Hence, we focus on the case when $a=2m+n-2$ and $b\geq m+2n-2.$ Suppose that the hypothesis of Lemma \ref{lem-2.10} is satisfied. Then, by Lemma \ref{lem-osc}, the curve $h_t:Y_t\rightarrow X_t$ lies in the subvariety $\Lambda^{\PP^m}_a\vert_t\subseteq X_t$. Lemmas \ref{lem-gentype} and \ref{lem-curve} imply that $\Lambda^{\PP^m}_a\vert_t$ is a smooth, irreducible curve of general type. Therefore, there is some $\varepsilon>0$ such that (\ref{eqn-definition2}) is satisfied. \qed

\subsubsection{Proof of Theorem \ref{thm-pmxpn}(1)(b).}Let $(m,n)=(2,2)$.

\textbf{Case 1:} Suppose that $s_1,s_2\leq 1.$ We proceed as in Case 1 of \S\ref{sec-a} and obtain the inequality
\begin{equation*}
    2g(Y_t)-2\geq (a-4)\cdot (Y_t\cdot H_1) + (b-4)\cdot (Y_t\cdot H_2),
\end{equation*}
which is at least $(Y_t\cdot H_1)+(Y_t\cdot H_2)$ when $a,b\geq 5.$ 

\textbf{Case 2:} Suppose that $s_1= 2$, so $s_2 =0$. We proceed as in Case 2 of \S\ref{sec-a}. Let $Q$ be the rank-one image of $\alpha:M_{H_1}\rightarrow N_{h_t/X_t},$ and let $\Sigma\subseteq \pmn$ denote the $\PP^m$-scroll given by $Q$. By Lemma \ref{lem-scroll} and the same calculations as above, we have $$\deg Q\geq \left(\frac{1}{a}-1\right)\cdot (H_1\cdot Y_t)+\left(-\frac{b}{a}\right)\cdot (H_2\cdot Y_t),$$
and so 
\begin{equation*}
\begin{split}
    2g(Y_t)-2
    &\geq ((a-3)-1)\cdot (Y_t\cdot H_1) + (b-3)\cdot (Y_t\cdot H_2)+ \deg Q\\
    &\geq \left(a+\frac{1}{a}-5\right)\cdot (H_1\cdot Y_t)+\left(b-\frac{b}{a}-3\right)\cdot (H_2\cdot Y_t),
\end{split}
\end{equation*}
which is at least some multiple of $(Y_t\cdot H_1)+(Y_t\cdot H_2)$ when $a,b\geq5.$ \qed

\subsubsection{Proof of Theorem \ref{thm-pmxpn}(1)(c).}\label{sec-c}Let $(m,n)=(3,1)$.

\textbf{Case 1:} Suppose that $(s_1,s_2)=(1,0).$ We proceed as in Case 1 of \S\ref{sec-a} and obtain the inequality
\begin{equation*}
    2g(Y_t)-2\geq (a-5)\cdot (Y_t\cdot H_1) + (b-2)\cdot (Y_t\cdot H_2),
\end{equation*}
which is at least $(Y_t\cdot H_1)+(Y_t\cdot H_2)$ when $a\geq6$ and $b\geq3.$ 

\textbf{Case 2:} Suppose that $(s_1,s_2)=(1,1).$ We proceed as in Cases 2 and 3 of \S\ref{sec-a}. For $i=1,2$, let $Q_i$ be the rank-one image of $M_{H_i}\rightarrow N_{h_t/X_t},$ and let $\Sigma_i\subseteq \pmn$ denote the scroll given by $Q_i$. By Lemma \ref{lem-scroll} and the same calculations as above, we have 
\begin{equation}\label{eqn-q1}
    \deg Q_1\geq \left(\frac{1}{a}-1\right)\cdot (H_1\cdot Y_t)+\left(-\frac{b}{a}\right)\cdot (H_2\cdot Y_t),    
\end{equation}
\begin{equation}\label{eqn-q2}
    \deg Q_2\geq \left(-\frac{a}{b}\right)\cdot (H_1\cdot Y_t)+\left(\frac{1}{b}-1\right)\cdot (H_2\cdot Y_t).    
\end{equation}
Using (\ref{eqn-q1}), we have
\begin{equation*}
\begin{split}
    2g(Y_t)-2
    &\geq (a-4)\cdot (Y_t\cdot H_1) + ((b-2)-1)\cdot (Y_t\cdot H_2)+ \deg Q_1\\
    &\geq \left(a+\frac{1}{a}-5\right)\cdot (H_1\cdot Y_t)+\left(b-\frac{b}{a}-3\right)\cdot (H_2\cdot Y_t),
\end{split}
\end{equation*}
which is at least some multiple of $(Y_t\cdot H_1)+(Y_t\cdot H_2)$ when $a\geq5$ and $b\geq4.$

Using (\ref{eqn-q2}), we have
\begin{equation*}
\begin{split}
    2g(Y_t)-2
    &\geq ((a-4)-1)\cdot (Y_t\cdot H_1) + (b-2)\cdot (Y_t\cdot H_2)+ \deg Q_2\\
    &\geq \left(a-\frac{a}{b}-5\right)\cdot (H_1\cdot Y_t)+\left(b+\frac{1}{b}-3\right)\cdot (H_2\cdot Y_t),
\end{split}
\end{equation*}
which is at least some multiple of $(Y_t\cdot H_1)+(Y_t\cdot H_2)$ when $a\geq8$ and $b\geq3.$

\textbf{Case 3:} Suppose that $(s_1,s_2)=(2,0).$ Then the same proof as Case 4 of \S\ref{sec-a} applies to show that there is some $\varepsilon>0$ such that (\ref{eqn-definition2}) is satisfied when $a\geq5$ and $b\geq3.$\qed

\begin{remark}\label{rmk-open}
    \S\ref{sec-c} shows where our proof techniques could not resolve the remaining open cases in $\PP^3\times \PP^1$. Specifically, our techniques could not produce an $\varepsilon>0$ such that curves of some types $(s_1,s_2)$ satisfy (\ref{eqn-definition2}), hence we could not confirm algebraic hyperbolicity. We list such types of curves for each open case:
    \begin{enumerate}
        \item For $(a,b)=(5,3)$, curves of types $(s_1,s_2)=(1,0)$ or $(1,1)$;
        \item For $(a,b)=(5,b)$ with $b\geq4$, curves of type $(s_1,s_2)=(1,0)$;
        \item For $(a,b)=(6,3)$ or $(7,3)$, curves of type $(s_1,s_2)=(1,1)$.
    \end{enumerate}
\end{remark}

\bibliographystyle{plain}

\end{document}